\newcommand{\sumdash}{\mathop{{\sum}'}}
\newcommand{\Z}{{\mathbb Z}}
\newcommand{\R}{{\mathbb R}}
\renewcommand{\S}{{ \mathscr S}}
\renewcommand{\a}{{\mathfrak a}}
\newtheorem{thm}{Theorem}[section]
\newtheorem{lem}{Lemma}[section]
\newtheorem{prop}{Proposition}[section]
\newcommand{\thmref}[1]{Theorem~\ref{#1}}
\newcommand{\propref}[1]{Proposition~\ref{#1}}
\newcommand{\lemref}[1]{Lemma~\ref{#1}}
\begin{document}
	
	\date{\today}
	
	\title[Triple convolution sum of the divisor function]
	{A triple convolution sum of the divisor function}
	
	\author{Bikram Misra \orcidlink{0009-0000-5863-2789}, M. Ram Murty \orcidlink{0000-0003-2086-102X}
		and Biswajyoti Saha \orcidlink{0009-0009-2904-4860}}
	
	\address{Bikram Misra and Biswajyoti Saha\ \newline
		Department of Mathematics, Indian Institute of Technology Delhi, 
		New Delhi 110016, India.}
	\email{bikram.misra@maths.iitd.ac.in, biswajyoti@maths.iitd.ac.in}
	
	\address{M. Ram Murty\ \newline
		Department of Mathematics and Statistics, Queen's University,
		Kingston K7L 3N6, Canada.}
	\email{murty@queensu.ca}
	
	\subjclass[2020]{11N37, 11M32, 11M45}
	
	\keywords{divisor function, triple convolution sum, multiple Dirichlet series, Tauberian theorem}
	
	\begin{abstract}
		We study the triple convolution sum of the 
		divisor function given by
		$$\sum_{n\leq x} d(n)d(n-h)d(n+h)$$
		for $h\neq 0$ and $d(n)$ denotes the 
		number of positive divisors of $n$.
		Based on algebraic and geometric considerations,
		Browning conjectured that the above sum is
		asymptotic to $c_hx(\log x)^3$, for a suitable constant
		$c_h\neq 0$, as $x\to \infty$.  
		This conjecture is still unproved. Using sieve-theoretic results of Wolke and Nair (respectively), it is possible to derive the exact order of the sum. The lower bound of the correct order of magnitude can also be derived by very elementary arguments. In this paper, using the Tauberian theory for multiple Dirichlet series, we prove an explicit lower bound and provide a new theoretical framework to predict Browning's conjectured constant $c_h$.
	\end{abstract}
	
	\maketitle
	
	\section{Introduction}
	The study of convolution sums of arithmetic functions lies at the heart of analytic number theory.
	Ingham studied \cite{AEI} the shifted and additive convolution sums of the divisor function.
	For any integer $n$, let $d(n)$ denote the number of positive divisors of $n$.
	Ingham showed that for a positive integer $h$,
	\begin{equation}\label{shifted-divisor}
		\sum_{n \le N} d(n) d(n+h) = \frac{6}{\pi^2} \sigma_{-1}(h) N (\log N)^2+ O(N \log N)
	\end{equation}
	as $N \to \infty$ and 
	\begin{equation}\label{add-divisor-1}
		\sum_{n < N} d(n) d(N-n) = \frac{6}{\pi^2} \sigma_{1}(N) (\log N)^2 + O(\sigma_1(N) \log N \log\log N)
	\end{equation}
	as $N \to \infty$, where $\sigma_s(n):=\sum_{d \mid n\atop d>0} d^s$ for a complex number $s$.
	
	For a fixed positive integer $h$, the triple convolution sum of the divisor function is defined as
	$$
	S(x;h) := \sum_{n\le x}d(n)d(n-h)d(n+h).
	$$
	It is still an open problem to determine the asymptotic
	behavior of this sum.  
	In \cite{TB}, Browning suggested using algebraic and geometric methods that $S(x;h) \sim c_hx(\log x)^3$ as $x\to\infty$ for a precise constant $c_h>0$, defined as 
	\begin{equation}\label{c_h}
		c_h:=\frac{11}{8}f(h)\prod_{p}\left(1-\frac{1}{p}\right)^2\left(1+\frac{2}{p}\right),
	\end{equation}
	with an explicit function $f$ defined multiplicatively by $f(1)=1$ and
	\begin{equation}\label{f}
		f(p^{\nu}) = \begin{cases}
			\left(1+\frac{4}{p}+\frac{1}{p^2}-\frac{3\nu+4}{p^{\nu+1}}-\frac{4}{p^{\nu+2}}+\frac{3\nu+2}{p^{\nu+3}}\right){\left(1+\frac{2}{p}\right)^{-1}\left(1-\frac{1}{p}\right)^{-2}} & \text{if $p>2,$}\\
			\frac{52}{11}-\frac{41+15\nu}{11\times 2^{\nu}} & \text{if $p=2,$}
		\end{cases} 
	\end{equation}
	for $\nu\ge1$.
	While this conjecture is still open, Browning \cite{TB} proved that for
	$\epsilon>0$ and $H\ge x^{\frac{3}{4}+\epsilon}$, as $x \to \infty$, one has
	\begin{equation}\label{browning on avg}
	\sum_{h\le H}\left(S(x;h) - c_hx(\log x)^3\right) = o(Hx(\log x)^3).
	\end{equation}
	There is a typo  in formula (1.2)
	in the published version of Browning's paper \cite{TB} on page 580.  The factor $(1-{1\over p})^{-1}$ should be $(1-{1\over p})^{-2}$ for $p>2$
	as we have written here in (\ref{f}).  This correction is consistent with our new heuristic derivation below using multiple Dirichlet series.  In fact,  the definition
	in the  \href{https://arxiv.org/pdf/1006.3476}{arXiv version}
    of \cite{TB} agrees with \eqref{f}.
	
	Formula \eqref{browning on avg} suggests that the conjectured asymptotic formula
	for $S(x;h)$ is true ``on average.'' Since divisor functions appear as the Fourier coefficients of Eisenstein series, spectral methods have been extensively used
	by several authors in the study of convolution sums of divisor functions. For example, Blomer \cite{VB} derived a related average result
	with power-saving error terms in this context using spectral tools. A far reaching generalisation of these
	average results for the higher convolutions of a fixed generalised divisor function was recently obtained by
	Matom\"aki, Radziwi\l\l, Shao, Tao and Ter\"av\"ainen \cite{MRSTT}.
	
	\subsection{Statement of the theorems}
    First and foremost, we provide a new framework using the theory of Dirichlet series attached to multivariable arithmetical functions to predict the constant $c_h$ in Browing's conjecture.
	It is worth noting that a lower bound of the correct order of magnitude can be obtained using elementary arguments (which can be seen in the proof of \thmref{main}). However, our result go further to capture an explicit constant in the lower bound. Using the theory of multiple Dirichlet series, we prove the following unconditional lower bound of $S(x;h)$,
	
	\begin{thm}\label{main}
		As $x \to \infty$, we have
		$$
		S(x; h) \geq c_h x(\log x)^3/27 + O_h(x(\log x)^2).
		$$
	\end{thm}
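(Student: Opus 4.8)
The plan is to bound each divisor factor from below by counting only its small divisors, and then to evaluate the resulting truncated sum by attaching a three–variable Dirichlet series and invoking the Tauberian theorem. Concretely, since every summand of $d(m)=\sum_{e\mid m}1$ is nonnegative, for any $y\ge 1$ we have $d(m)\ge \#\{e\mid m:\ e\le y\}$. Applying this with $y=x^{1/3}$ to each of the three factors (the sum defining $S(x;h)$ runs over $h<n\le x$, so all arguments are positive), we obtain the honest lower bound
\begin{equation*}
S(x;h)\ \ge\ \sum_{a,b,c\le x^{1/3}} N(a,b,c;x),\qquad
N(a,b,c;x):=\#\{h<n\le x:\ a\mid n,\ b\mid n-h,\ c\mid n+h\}.
\end{equation*}
It is precisely this truncation of the three divisor sums at $x^{1/3}$ that will produce the factor $1/27$: each full divisor sum contributes a factor $\log x$, and restricting it to divisors up to $x^{1/3}$ multiplies that factor by $1/3$. (Pushing the cutoff up toward $x$ would in principle yield the full constant $c_h$, but then $\lcm(a,b,c)$ can exceed $x$ and the elementary lattice count below breaks down; this is the genuine difficulty underlying Browning's open conjecture.)

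Next I would evaluate $N(a,b,c;x)$ by the Chinese Remainder Theorem. The simultaneous congruences $n\equiv 0\ (a)$, $n\equiv h\ (b)$, $n\equiv -h\ (c)$ are solvable if and only if the pairwise compatibility conditions $\gcd(a,b)\mid h$, $\gcd(a,c)\mid h$ and $\gcd(b,c)\mid 2h$ hold (pairwise compatibility is sufficient for three congruences); let $\rho_h(a,b,c)\in\{0,1\}$ be the indicator of solvability. When solvable the solution is unique modulo $\lcm(a,b,c)$, so
\begin{equation*}
N(a,b,c;x)=\rho_h(a,b,c)\Big(\frac{x}{\lcm(a,b,c)}+O(1)\Big).
\end{equation*}
Summing the $O(1)$ error over the at most $x$ triples with $a,b,c\le x^{1/3}$ contributes $O(x)$, which is absorbed into the claimed error term. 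Everything therefore reduces to the asymptotic evaluation of
\begin{equation*}
T(y):=\sum_{a,b,c\le y}\frac{\rho_h(a,b,c)}{\lcm(a,b,c)},\qquad y=x^{1/3},
\end{equation*}
after which $S(x;h)\ge x\,T(x^{1/3})+O(x)$.

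To handle $T(y)$ I would introduce the three–variable Dirichlet series
\begin{equation*}
Z(s_1,s_2,s_3):=\sum_{a,b,c\ge 1}\frac{\rho_h(a,b,c)}{\lcm(a,b,c)\,a^{s_1}b^{s_2}c^{s_3}}
=\prod_p Z_p(s_1,s_2,s_3),
\end{equation*}
the Euler factorisation following from the multiplicativity of $\lcm$, $\gcd$ and hence of $\rho_h$. A prime–by–prime computation should show that $Z$ factors as $\zeta(s_1+1)\zeta(s_2+1)\zeta(s_3+1)\,G(s_1,s_2,s_3)$, with $G$ given by an Euler product converging in a neighbourhood of the origin, so that $Z$ has a simple pole along each hyperplane $s_i=0$. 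Here the condition $\gcd(b,c)\mid 2h$ (rather than $\mid h$) is exactly what singles out $p=2$, and the dependence of $Z_p$ on $v_p(h)$ is what will reproduce the dependence of $f(p^\nu)$ on $\nu$; carrying out the local computation I expect to obtain $G(0,0,0)=c_h$ with $c_h,f$ as in \eqref{c_h}–\eqref{f}, which is the step that furnishes the new heuristic for Browning's constant. Feeding this polar structure into the Tauberian theorem for multiple Dirichlet series yields $T(y)=G(0,0,0)(\log y)^3+O\big((\log y)^2\big)$, whence with $y=x^{1/3}$,
\begin{equation*}
T(x^{1/3})=\frac{c_h}{27}(\log x)^3+O\big((\log x)^2\big),
\end{equation*}
and multiplying by $x$ gives the theorem.

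The main obstacle is the third paragraph: establishing the analytic continuation and the precise polar structure of $Z$, and verifying the regularity and growth hypotheses needed to apply the multivariable Tauberian theorem in the tube over a neighbourhood of the origin. A secondary but delicate point is the explicit evaluation of the local factors $Z_p$ and the identification $G(0,0,0)=c_h$, which must reproduce both the $p>2$ and the $p=2$ cases of $f(p^\nu)$ in \eqref{f} and thereby confirm the corrected exponent in Browning's formula. By contrast the truncation and lattice–point steps are elementary and feed only into the error term $O_h(x(\log x)^2)$.
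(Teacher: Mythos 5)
Your proposal follows essentially the same route as the paper: truncate the three divisor sums at $x^{1/3}$ (which is exactly where the $1/27$ comes from), evaluate the resulting congruence counts via the non-coprime Chinese Remainder Theorem, and feed the multiplicative density $\rho_h(a,b,c)/\lcm(a,b,c)$ into de la Bret\`eche's multivariable Tauberian theorem after factoring out $\zeta(1+s_1)\zeta(1+s_2)\zeta(1+s_3)$, with the local Euler-factor computation identifying the constant as $c_h$. Your direct lower bound $d(m)\ge\#\{e\mid m:e\le x^{1/3}\}$ is a slightly cleaner way to reach the truncated sum than the paper's decomposition $S=S_1+S_2-S_3$, but the substance is identical, including the steps you flag as the main remaining work (analytic continuation, growth bounds, and the $p=2$ versus $p>2$ local factors), which the paper carries out in Proposition \ref{second} and the verification of $\Delta(h)=11f(h)/8$.
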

	Our method of dealing with the triple convolution sum $S(x;h)$ leads to the constant $c_h$ as in $\eqref{c_h}$ (see Proposition \ref{second}). This provides a theoretical framework to get to the arithmetic constants appearing in several convolution questions related to divisor function, which is studied in the forthcoming paper \cite{MS}.
	
	The study of the asymptotic behavior of the divisor function at polynomial arguments has classical origin. Using theory of smooth numbers, Erd\H{o}s \cite{erdos} established in 1952 that $\sum_{n\le x}d(F(n))\asymp x\log x$, where $F(t)$ is an irreducible polynomial with integer coefficients. Wolke \cite{DW} generalised this result for a general multiplicative function evaluated at a sequence of natural numbers under certain sieve-theoretic hypotheses. Applying inequality \eqref{inequality-a}, it can be shown from Wolke's results that $S(x;h)\asymp_hx(\log x)^3$. Further generalizations of Wolke's work were  obtained by Nair \cite{Nair}, and subsequently by Nair and Tenenbaum \cite{NT}.
	
	An upper bound of the correct order can also be derived from the main theorem of Nair \cite{Nair}. For the sake of clarity of exposition, we give  a streamlined proof of the upper bound inequality by applying the theory of smooth numbers implicit in the method of
	Erd\H{o}s.  Such a streamlined proof will make the method available for more general problems in number theory.
	
	\begin{thm}\label{main2}
		As $x \to \infty$, we have
		$$
		S(x;h) \ll_h x(\log x)^3.
		$$
	\end{thm}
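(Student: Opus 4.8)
The plan is to avoid divisor switching and instead expand all three divisor factors completely, writing
$$S(x;h)=\sum_{n\le x}d(n)d(n-h)d(n+h)=\sum_{a,b,c}N(a,b,c),\qquad N(a,b,c):=\#\{n\le x:\ a\mid n,\ b\mid n-h,\ c\mid n+h\},$$
where $a,b,c$ range over positive integers at most $x+|h|$. The three congruences $n\equiv 0\pmod a$, $n\equiv h\pmod b$, $n\equiv -h\pmod c$ are simultaneously solvable only if the pairwise compatibility conditions $\gcd(a,b)\mid h$, $\gcd(a,c)\mid h$ and $\gcd(b,c)\mid 2h$ hold, and these are automatic, since $a\mid n$, $b\mid n-h$, $c\mid n+h$ force $\gcd(a,b)\mid\gcd(n,n-h)\mid h$, and so on. When the system is compatible its solutions form a single residue class modulo $L:=\lcm(a,b,c)$, so that $N(a,b,c)=x/L+O(1)$; otherwise $N(a,b,c)=0$. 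The crucial gain over naive divisor switching is that the compatibility conditions pin every pairwise $\gcd$ to a divisor of $2h$, i.e. they force $a,b,c$ to be pairwise coprime up to an $h$-bounded factor; this is exactly what caps the power of the logarithm at $3$.

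First I would isolate the main term by splitting according to whether $L\le x$ or $L>x$. The compatible triples with $L\le x$ contribute
$$\sum_{\substack{\text{compatible}\\ L\le x}}N(a,b,c)\ \le\ x\sum_{\substack{\text{compatible}\\ L\le x}}\frac{1}{\lcm(a,b,c)}\ +\ \#\bigl\{(a,b,c)\ \text{compatible}:\ L\le x\bigr\}.$$
Writing each pairwise $\gcd$ as a divisor of $2h$ and factoring out the near-coprime structure reduces the first sum to an $h$-dependent constant times $\bigl(\sum_{a\le x}1/a\bigr)^{3}$, giving a bound $\ll_h x(\log x)^3$; this is the step that fixes the correct order, and carried out with equalities rather than inequalities it is what produces the constant $c_h$ of Proposition~\ref{second}. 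The counting term is of smaller order, since near-coprimality gives $abc\ll_h L\le x$, whence the number of admissible triples is $\ll_h x(\log x)^2$. Thus the range $L\le x$ already yields the claimed bound $\ll_h x(\log x)^3$.

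The hard part is the contribution of the triples with $L=\lcm(a,b,c)>x$, for which $N(a,b,c)\le 1$ and the total equals
$$\#\bigl\{(n,a,b,c):\ n\le x,\ a\mid n,\ b\mid n-h,\ c\mid n+h,\ \lcm(a,b,c)>x\bigr\}.$$
Because the pairwise gcds divide $2h$, the inequality $L>x$ forces $abc\gg_h x$, so at least one of $a,b,c$ exceeds a constant (depending on $h$) times $x^{1/3}$. A direct congruence count over moduli this large only delivers the useless bound $O(x^{3/2})$, and controlling this error term is the genuine obstacle of the theorem. To handle it I would invoke the smooth-number device underlying Erd\H{o}s's treatment of $\sum_{n\le x}d(F(n))$: fix a threshold $z=x^{\delta}$ and split each of $n,n-h,n+h$ into its $z$-smooth part and its $z$-rough part. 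Since each argument is at most $x+|h|$, its rough part is a product of $O(1/\delta)$ primes exceeding $z$, so the large divisors responsible for $L>x$ are built from boundedly many large primes and hence lie in a sparse set. Estimating the joint congruence count of these rough parts by a Mertens-type bound (each large prime $p>z$ contributing a factor $\sum_{z<p\le x}1/p\ll_\delta 1$ rather than a logarithm), while summing the smooth parts against the convergent series already appearing in the main term, one shows that this range too contributes $\ll_h x(\log x)^3$. Combining the three ranges gives $S(x;h)\ll_h x(\log x)^3$. The delicate point throughout is the bookkeeping in this final step: one must check that peeling off the large prime factors neither inflates the exponent of $\log x$ beyond $3$ nor introduces an unbounded dependence on $h$.
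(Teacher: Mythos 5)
Your decomposition is genuinely different from the paper's: instead of applying the submultiplicativity inequality of Lemma \ref{d} to reduce to $\sum_{n\le x}d(f(n))$ with $f(t)=t(t-h)(t+h)$, you expand all three divisor factors into congruence counts (as the paper does only for the \emph{lower} bound). Your treatment of the range $\lcm(a,b,c)\le x$ is sound: compatibility forces the pairwise gcds to divide $2h$, hence $abc\ll_h \lcm(a,b,c)\le x$ and $1/\lcm(a,b,c)\ll_h 1/(abc)$, so that range contributes $\ll_h x\sum_{abc\ll_h x}1/(abc)+O_h(x(\log x)^2)\ll_h x(\log x)^3$.

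The gap is in the range $\lcm(a,b,c)>x$, which you rightly call the crux but then dispatch with a sketch that does not work as stated. The claim that ``the large divisors responsible for $L>x$ are built from boundedly many large primes'' is false: a divisor $a\mid n$ with $a>x^{1/3}$ may be entirely $z$-smooth (take $n$ a smooth number with many prime factors, for which $d(n)$ can exceed any fixed power of $\log x$). For such $n$ the rough parts of $n$, $n\pm h$ are trivial, your Mertens-type factor $\sum_{z<p\le x}1/p$ never enters, and the quantity you are left to control is $\sum d(n_s)d((n-h)_s)d((n+h)_s)$ over $n$ whose smooth parts are large --- and the series over smooth moduli that you propose to ``sum against'' is not the convergent object you need. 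What actually closes this case in the paper is the combination of (i) divisor switching, inequality \eqref{dual}, which you explicitly set out to avoid but which is essential: it converts ``$n$ has a large smooth part $m$'' into ``$n$ has a smooth divisor $d\ge\sqrt{m}>x^{7/22}$'', (ii) the congruence count of Lemma \ref{b} giving $\ll 3^{\omega(d)}x/d$ such $n$, and (iii) the estimate of Lemma \ref{smooth-new} (resting on the $\Psi(x,y)$ bounds of Proposition \ref{prop-smooth}) showing that $\sum 3^{\omega(d)}/d$ over \emph{large smooth} $d$ is negligible, uniformly enough in the smoothness parameter $t$ to beat the factor $2^{3t}$ coming from the rough primes. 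Until your argument produces, for each offending quadruple, a single large smooth modulus to sieve on, the $L>x$ range is not bounded, and the theorem is not proved.
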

	
	 
	\section{Preliminaries}
	In this section, we collect the results that are required to prove our theorems. Firstly, we need a
	variant of the Chinese remainder theorem. The key tools for the first theorem come from
	the theory of multiple Dirichlet series and Tauberian theorems due to de la Bret\`eche \cite{RB}. 
	For the second theorem, we need some results on smooth numbers, which is a standard
	chapter in sieve theory.
	
	\subsection{Elementary number theory results}
	The following variant of the Chinese remainder theorem can be found in \cite[Theorem 3-12]{WJL}. 
	Apparently, this non-standard version of the Chinese remainder theorem for
	non-coprime moduli was first written down by the Buddhist monk Yih-hing in 717 CE
	(see pages 57-64 of \cite{LED}). The first formal proof seems to have been written down
	by Stieltjes (of Stieltjes integral fame) as late as 1890. 
	
	\begin{lem}\label{CRT}
		For positive integers $d_1,\ldots,d_k$ and integers $a_1,\ldots,a_k$, the system 
		\begin{equation}\label{lemma-murty-goel}
			\begin{cases}
				x\equiv a_1\bmod{d_1}\\
				\vdots\\
				x\equiv a_k\bmod{d_k}
			\end{cases}\
		\end{equation}
		has a solution if and only if $\gcd (d_i,d_j)\mid(a_i-a_j)$ for all $1\le i,j\le k$.
		Moreover, when a solution exists, it is unique modulo the least common multiple $[d_1,\ldots,d_k]$.
	\end{lem}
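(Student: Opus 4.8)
The plan is to prove the statement by induction on the number $k$ of congruences, reducing at each stage to the two-congruence case together with a single compatibility check. The necessity of the divisibility condition is immediate: if $x$ solves the system, then for any pair $i,j$ we have $d_i \mid (x-a_i)$ and $d_j \mid (x-a_j)$, so $\gcd(d_i,d_j)$ divides both $x-a_i$ and $x-a_j$, hence divides their difference $a_i - a_j$.

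For sufficiency I would first settle the base case $k=2$. Writing $g = \gcd(d_1,d_2)$, substitute $x = a_1 + d_1 t$ into the second congruence to obtain the linear congruence $d_1 t \equiv a_2 - a_1 \pmod{d_2}$; this is solvable precisely when $g \mid (a_2 - a_1)$, which is the hypothesis, and its solutions for $t$ form a single residue class modulo $d_2/g$. Consequently $x$ ranges over a single residue class modulo $d_1 d_2 / g = [d_1,d_2]$, giving both existence and the asserted uniqueness.

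For the inductive step, assume the result for $k-1$ congruences. Applying it to the first $k-1$ lines yields a solution $x_0$, unique modulo $D := [d_1,\dots,d_{k-1}]$, so these lines are equivalent to the single congruence $x \equiv x_0 \pmod{D}$. It then remains to combine this with $x \equiv a_k \pmod{d_k}$ via the $k=2$ case, which requires verifying $\gcd(D, d_k) \mid (x_0 - a_k)$. For each $i < k$ one has $\gcd(d_i, d_k) \mid (x_0 - a_i)$ (since $d_i \mid x_0 - a_i$) and $\gcd(d_i,d_k) \mid (a_i - a_k)$ (by hypothesis), hence $\gcd(d_i, d_k) \mid (x_0 - a_k)$. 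The lattice identity $\gcd\big([d_1,\dots,d_{k-1}], d_k\big) = \big[\gcd(d_1,d_k),\dots,\gcd(d_{k-1},d_k)\big]$, expressing distributivity of $\gcd$ over $\lcm$ in the divisibility lattice, then shows that $\gcd(D,d_k)$ is the least common multiple of quantities each dividing $x_0 - a_k$, and therefore itself divides $x_0 - a_k$. This closes the induction, with the uniqueness modulus propagating as $[D, d_k] = [d_1,\dots,d_k]$ by associativity of $\lcm$.

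I expect the main obstacle to be precisely this compatibility propagation, i.e.\ justifying $\gcd(D,d_k) \mid (x_0-a_k)$ cleanly. An alternative that sidesteps the lattice identity is to argue prime by prime: localizing every congruence at a fixed prime $p$ leaves only the constraint of largest $p$-exponent, since prime-power moduli are totally ordered by divisibility, so the pairwise hypotheses become exactly the local solvability conditions; the ordinary coprime-moduli Chinese remainder theorem then reassembles the global solution, unique modulo $\prod_p p^{\max_i v_p(d_i)} = [d_1,\dots,d_k]$.
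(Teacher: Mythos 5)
Your proof is correct, but note that the paper does not actually prove this lemma: it is quoted verbatim from LeVeque \cite[Theorem 3-12]{WJL} (with a historical aside attributing it to Yih-hing and Stieltjes), so there is no in-paper argument to compare against. Your inductive scheme is a standard and complete route: the base case $k=2$ via the linear congruence $d_1 t\equiv a_2-a_1\pmod{d_2}$ is handled correctly, and you correctly identified the only delicate point, namely the propagation of compatibility $\gcd([d_1,\dots,d_{k-1}],d_k)\mid (x_0-a_k)$. Your resolution via the distributive-lattice identity $\gcd([d_1,\dots,d_{k-1}],d_k)=[\gcd(d_1,d_k),\dots,\gcd(d_{k-1},d_k)]$ is valid (check it prime by prime: $\min(\max_i\alpha_i,\gamma)=\max_i\min(\alpha_i,\gamma)$), and the uniqueness modulus $[D,d_k]=[d_1,\dots,d_k]$ propagates as you say. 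The alternative localization argument you sketch at the end is equally correct and is arguably the cleanest way to see the statement, since prime-power moduli are totally ordered by divisibility and the pairwise conditions become exactly the local compatibility conditions at each prime. Either version would serve as a self-contained proof where the paper only gives a citation.
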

	
	Related to the above lemma, 
	we will need various facts regarding the number of solutions 
	of the congruence
	$$f(n)\equiv 0\bmod{a} ,$$
	for a given polynomial $f(x)=a_0x^m + a_1 x^{m-1} + \cdots + a_m \in \Z[x]$.  Such a polynomial is called an integral polynomial.
	It is said to be primitive if the greatest common divisor of
	$a_i$'s equals 1.  
	We quote the following from Nagell \cite[Theorems 52 and 54, page 90]{nagell}.
	\begin{lem} \label{nagell}  Let
		$f(x)$ be a primitive integral polynomial of degree $m$
		with discriminant $D$ different from zero. 
		If $p$ is a prime divisor of $D$, then the
		congruence 
		$$f(x)\equiv 0 \bmod{p^a} $$
		has at most $mD^2$ solutions.  If $p$ is coprime to $D$,
		then the number of solutions is at most $m$.
	\end{lem}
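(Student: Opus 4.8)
The plan is to reduce everything to the behaviour of the $p$-adic roots of $f$ and to use Hensel's lemma, treating the two assertions of the statement as the \emph{nonsingular} case $p\nmid D$ and the \emph{singular} case $p\mid D$. Throughout, write $\bar f$ for the reduction of $f$ modulo $p$; since $f$ is primitive, not all of its coefficients are divisible by $p$, so $\bar f$ is a nonzero polynomial of degree at most $m$ and hence has at most $m$ roots in $\mathbb{F}_p$. I will also use the standard identity $\operatorname{Res}(f,f')=\pm a_0 D$, so that the $p$-adic valuation of the resultant of $f$ and $f'$ is controlled by $v_p(D)$; this is the bridge between the statements ``$\bar f$ has a repeated root'' and ``$p\mid D$''.

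First I would dispose of the nonsingular case $p\nmid D$. Here $\bar f$ is separable over $\mathbb{F}_p$, so every root of $\bar f$ is simple, i.e.\ $f'(x_0)\not\equiv 0 \pmod p$ at each root $x_0$ of $f$ modulo $p$. By Hensel's lemma each such root lifts uniquely to a root of $f$ modulo $p^a$, and conversely every root modulo $p^a$ reduces to a root modulo $p$. Thus the number of solutions of $f(x)\equiv 0\pmod{p^a}$ equals the number of roots of $\bar f$, which is at most $m$, for every $a$. (A prime dividing the leading coefficient $a_0$ but not $D$ makes $\deg\bar f<m$ and needs only minor separate bookkeeping, still yielding at most $m$ roots.)

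The substance of the lemma is the singular case $p\mid D$, and this is where I expect the main difficulty. Now $\bar f$ may have repeated roots, at which $f'\equiv 0\pmod p$ and naive lifting fails: a solution $x_0$ modulo $p^j$ with $f'(x_0)\equiv 0\pmod p$ lifts either to no solution or to all $p$ solutions modulo $p^{j+1}$, so the count can a priori proliferate as $a$ grows. The key is the quantitative form of Hensel's lemma: if $v_p(f(x_0))>2\,v_p(f'(x_0))$, then $x_0$ lies within $p$-adic distance $p^{-(a-\tau)}$ of a unique root $\alpha\in\Z_p$ of $f$, where $\tau=v_p(f'(\alpha))$. Since $f$ has at most $m$ roots $\alpha_1,\dots,\alpha_r$ in $\Z_p$ (with $r\le m$), every solution modulo $p^a$ eventually collapses into one of the residue classes modulo $p^{\,a-\tau_i}$ determined by some $\alpha_i$, and each such class contributes at most $p^{\tau_i}$ solutions modulo $p^a$.

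It then remains to bound $\sum_{i=1}^r p^{\tau_i}$ uniformly in $a$, and this is the crux of the argument. Here one uses $\tau_i=v_p(f'(\alpha_i))\le v_p\big(\prod_i f'(\alpha_i)\big)=v_p\big(\operatorname{Res}(f,f')/a_0^{m-1}\big)$ together with $\operatorname{Res}(f,f')=\pm a_0 D$ to conclude that each $\tau_i$ is bounded in terms of $v_p(D)$. The crude but uniform estimate $p^{\tau_i}\le p^{v_p(D)}\le |D|\le D^2$ then gives $\sum_{i=1}^r p^{\tau_i}\le mD^2$, which is the asserted bound; the finitely many ``unresolved'' solutions (those with $v_p(f(x_0))\le 2\,v_p(f'(x_0))$, relevant only for small $a$) occupy no more residue classes and are absorbed into the same count. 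The main obstacle is thus precisely this uniformity: converting the fact that a repeated root modulo $p$ can spawn many solutions modulo $p^a$ into a bound independent of $a$, which is achieved by pinning the proliferation to the valuation of the discriminant.
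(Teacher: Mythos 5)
The paper does not actually prove this lemma; it is quoted verbatim from Nagell (Theorems 52 and 54 of his book), so there is no internal argument to compare yours against. Your Hensel-lemma strategy is the standard modern route to such statements, and your treatment of the nonsingular case $p\nmid D$ is correct, modulo the leading-coefficient bookkeeping you acknowledge in passing.

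In the singular case, however, the two steps you describe as the ``crux'' are asserted rather than proved, and that is exactly where the work lies. First, the inequality $\tau_i=v_p(f'(\alpha_i))\le v_p\bigl(\prod_j f'(\beta_j)\bigr)$ does not follow merely from writing down the product: in $\operatorname{Res}(f,f')=a_0^{m-1}\prod_j f'(\beta_j)$ the product runs over \emph{all} roots $\beta_j$ in $\overline{\Q_p}$, not only the $\Z_p$-roots, and you cannot drop the remaining factors without first showing their valuations are nonnegative (they live in ramified extensions and can a priori contribute negative valuation when $f$ has non-integral roots, which happens precisely when $p\mid a_0$); there is also the factor $a_0^{m-1}$ to track. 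A clean repair is to factor $f=(x-\alpha_i)g$ with $g\in\Z_p[x]$ and use $D(f)=D(g)\,f'(\alpha_i)^2$, which yields the stronger bound $2\tau_i\le v_p(D)$ (essentially Huxley's $m|D|^{1/2}$, which the paper records as an improvement of this very lemma). Second, the ``unresolved'' solutions with $v_p(f(x_0))\le 2v_p(f'(x_0))$ are not absorbed for free: one must evaluate the Bezout identity $uf+vf'=\pm a_0D$ at such an $x_0$ to see that they can occur only when $a\le 2v_p(a_0D)$, and for those small $a$ a count is still required; the trivial bound $p^a\le p^{2v_p(a_0D)}$ gives at most $D^2$ classes only when $p\nmid a_0$, so the case $p\mid a_0$ and $p\mid D$ needs either a separate induction on $a$ or the factorization trick above. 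Neither gap is fatal --- the skeleton can be completed --- but as written the proposal is thinnest at exactly the point it identifies as the main obstacle.
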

	
	Combining this with the Chinese remainder theorem and specializing
	to the polynomial $f(x)=x(x+h)(x-k)$
	immediately implies the following.
	\begin{lem} \label{b} For $hk(h+k)\neq 0$, 
		the number of solutions of the congruence
		$$ n(n+h)(n-k)\equiv 0 \bmod a$$
		is bounded by $3^{\omega(a)}(hk)^4(h+k)^4,$
		where $\omega(a)$ is the number of distinct
		prime divisors of $a$.
	\end{lem}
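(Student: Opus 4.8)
The plan is to specialize Lemmas~\ref{CRT} and~\ref{nagell} to the cubic $f(x)=x(x+h)(x-k)$ and then exploit multiplicativity of the solution count. First I would record that $f$ is monic of degree $m=3$, hence primitive, and compute its discriminant directly from its roots $0,-h,k$. Since the discriminant of a monic cubic is the product of the squared differences of its roots, one finds $D=\big(0-(-h)\big)^2(0-k)^2\big((-h)-k\big)^2=h^2k^2(h+k)^2$, which is nonzero precisely because of the hypothesis $hk(h+k)\neq 0$ (this is exactly the condition that the three roots be distinct). In particular $D^2=(hk)^4(h+k)^4$, the quantity appearing in the claimed bound.

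Next I would set $\rho(a):=\#\{n \bmod a: f(n)\equiv 0\}$ and observe, via the Chinese remainder theorem (the coprime case of Lemma~\ref{CRT}), that $\rho$ is multiplicative: for $a=\prod_{p^e\|a}p^e$ one has $\rho(a)=\prod_{p^e\|a}\rho(p^e)$. I would then split the prime powers dividing $a$ according to whether $p\nmid D$ or $p\mid D$. For the unramified primes $p\nmid D$, Lemma~\ref{nagell} gives $\rho(p^e)\le m=3$, so each contributes a factor at most $3$.

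The remaining---and main---point is to bound the \emph{joint} contribution of the ramified primes $p\mid D$ by a single factor $D^2$, rather than one factor of $D^2$ per prime, which is what a careless application of Lemma~\ref{nagell} would produce. The clean way to do this is to use the sharper local form underlying Nagell's estimate, namely $\rho(p^e)\le 3\,p^{2v_p(D)}$ for $p\mid D$ (where $v_p$ is the $p$-adic valuation); this reduces to the unramified bound when $v_p(D)=0$ and implies the quoted $\le 3D^2$ since $p^{v_p(D)}\mid D$. Multiplying over the ramified primes then telescopes:
\[
\prod_{p\mid D,\ p\mid a}\rho(p^e)\ \le\ 3^{\,\omega(\gcd(a,D))}\prod_{p\mid D}p^{2v_p(D)}\ =\ 3^{\,\omega(\gcd(a,D))}D^2 .
\]
Combining the ramified and unramified contributions gives $\rho(a)\le 3^{\omega(a)}D^2=3^{\omega(a)}(hk)^4(h+k)^4$, as desired. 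The only delicate step is thus the bookkeeping that isolates a single $D^2$; everything else is a direct substitution into the two quoted lemmas.
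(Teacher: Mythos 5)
Your argument is correct and follows the same skeleton the paper intends: the paper's entire proof of Lemma~\ref{b} is the remark that combining Lemma~\ref{nagell} with the Chinese remainder theorem and specializing to $f(x)=x(x+h)(x-k)$ ``immediately implies'' the result, and your computation of the discriminant $D=h^2k^2(h+k)^2$ (nonzero exactly when $hk(h+k)\neq 0$) together with the multiplicative decomposition $\rho(a)=\prod_{p^e\|a}\rho(p^e)$ are precisely the intended ingredients. The one place where you go beyond the paper is worth flagging, because it is a real point: applying Lemma~\ref{nagell} literally --- one factor of $mD^2$ for each ramified prime dividing $a$ --- yields only $3^{\omega(a)}D^{2\omega(\gcd(a,D))}$, which exceeds the stated $3^{\omega(a)}D^2$ as soon as two ramified primes divide $a$. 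Your repair, invoking the local form $\rho(p^e)\le 3\,p^{2v_p(D)}$ underlying Nagell's estimate so that the ramified factors telescope to a single $D^2$, is the right one, and it is consistent with the paper's subsequent remark that Huxley's local improvement upgrades the bound to $3^{\omega(a)}hk(h+k)=3^{\omega(a)}|D|^{1/2}$ --- again a single global factor, which only makes sense if the local bounds multiply out this way. If one refuses the sharper local statement, the literal reading of Lemma~\ref{nagell} still gives $\rho(a)\le 3^{\omega(a)}C_{h,k}$ with a constant depending only on $h$ and $k$, which is all that any application of Lemma~\ref{b} in the paper actually uses; but to obtain the constant exactly as stated, your extra bookkeeping is needed. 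In short: correct proof, same approach, with one genuinely necessary step made explicit that the paper elides.
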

	Lemma \ref{nagell} has since been improved by Huxley \cite{huxley} who gave a bound of $m|D|^{1/2}$
	instead of $mD^2$, in the case $p|D$.  Thus, the bound in the above Lemma \ref{b} can be improved to 
	$$3^{\omega(a)} hk(h+k).$$
	We will not need the sharper result but merely record it here
	for academic interest.

	We will need to use the elementary results recorded
	in the next lemma.  They follow from standard analytic number theory
	(see \cite[Ex. 4, Chap. I.3]{tenenbaum}).
	\begin{lem}\label{ant}  Let $\omega(a)$ be the number of distinct
		prime divisors of $a$.  Then as $x \to \infty$, we have
		$$\sum_{a\leq x} 3^{\omega(a)} \ll x(\log x)^2\qquad \text{and} \qquad 
		\sum_{a\leq x} {3^{\omega(a)}\over a} \ll (\log x)^3.$$
	\end{lem}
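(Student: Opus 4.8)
The plan is to dominate $3^{\omega(a)}$ pointwise by the ternary divisor function $d_3 = 1 * 1 * 1$ and then estimate the latter by elementary counting. Since both $3^{\omega}$ and $d_3$ are multiplicative, it suffices to compare them on prime powers: for every prime $p$ and every $k \ge 1$ one has $3^{\omega(p^k)} = 3$, while $d_3(p^k) = \binom{k+2}{2} \ge \binom{3}{2} = 3$. Hence $3^{\omega(a)} \le d_3(a)$ for all $a \ge 1$, and both bounds in the lemma will follow once we control $\sum_{a \le x} d_3(a)$ and $\sum_{a \le x} d_3(a)/a$.

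For the first estimate I would interpret $d_3(a)$ as the number of ordered triples $(a_1, a_2, a_3)$ with $a_1 a_2 a_3 = a$, so that $\sum_{a \le x} d_3(a)$ counts triples with $a_1 a_2 a_3 \le x$. Summing first over $a_3$ for each fixed pair $(a_1, a_2)$ satisfying $a_1 a_2 \le x$ gives at most $x/(a_1 a_2)$ admissible values, whence
$$\sum_{a \le x} 3^{\omega(a)} \le \sum_{a \le x} d_3(a) \le x \sum_{a_1 a_2 \le x} \frac{1}{a_1 a_2} \le x \Big( \sum_{a \le x} \frac{1}{a} \Big)^2 \ll x (\log x)^2,$$
using only the elementary harmonic estimate $\sum_{a \le x} 1/a = \log x + O(1)$.

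The second estimate is handled in the same spirit, dividing through by $a = a_1 a_2 a_3$ before summing:
$$\sum_{a \le x} \frac{3^{\omega(a)}}{a} \le \sum_{a \le x} \frac{d_3(a)}{a} = \sum_{a_1 a_2 a_3 \le x} \frac{1}{a_1 a_2 a_3} \le \Big( \sum_{a \le x} \frac{1}{a} \Big)^3 \ll (\log x)^3.$$
Alternatively, this second bound drops out of the first by partial summation. I do not anticipate any genuine obstacle here: the only points requiring care are the pointwise comparison $3^{\omega(a)} \le d_3(a)$ and the harmonic-sum estimate, both routine. I note in passing that a sharper, fully asymptotic statement could be obtained from the factorization $\sum_{a \ge 1} 3^{\omega(a)} a^{-s} = \zeta(s)^3 H(s)$, where $H(s) = \prod_p \big(1 - 3 p^{-2s} + 2 p^{-3s}\big)$ converges for $\Re(s) > 1/2$, combined with a standard Tauberian theorem; but for the upper bounds needed here the elementary comparison above is entirely sufficient.
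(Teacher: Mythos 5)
Your argument is correct and complete. The paper itself does not prove this lemma; it simply points to a standard exercise in Tenenbaum's book, so your write-up supplies a self-contained proof where the paper offers only a citation. The two ingredients you isolate are both sound: the pointwise domination $3^{\omega(a)}\le d_3(a)$ follows from multiplicativity together with $3^{\omega(p^k)}=3\le\binom{k+2}{2}=d_3(p^k)$ for $k\ge1$ (with equality at $a=1$), and the lattice-point count $\sum_{a_1a_2a_3\le x}1\le x\sum_{a_1a_2\le x}(a_1a_2)^{-1}\ll x(\log x)^2$, respectively $\sum_{a_1a_2a_3\le x}(a_1a_2a_3)^{-1}\le\bigl(\sum_{a\le x}a^{-1}\bigr)^3\ll(\log x)^3$, uses nothing beyond the harmonic sum. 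Your closing remark is also accurate: $\sum_a 3^{\omega(a)}a^{-s}=\zeta(s)^3\prod_p\bigl(1-3p^{-2s}+2p^{-3s}\bigr)$, which is in fact the same Euler factor $(1-u)^2(1+2u)$ that appears in the constant of Proposition 3.1 of the paper, so a Tauberian or Selberg--Delange argument would give genuine asymptotics; but for the stated upper bounds your elementary comparison is entirely adequate and arguably preferable, since it keeps this auxiliary lemma independent of the heavier machinery used elsewhere in the paper.
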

	
	We will also need the following result from
	elementary number theory.
	\begin{lem} \label{d}  For the divisor function $d(n)$, we have for $hk(h+k)\neq 0$,
		\begin{equation}\label{inequality-a}
		d(n)d(n+h)d(n-k)\leq d(h)d(k) d(h+k)d(n(n+h)(n-k)).
		\end{equation}
	\end{lem}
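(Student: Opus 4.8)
The plan is to reduce the inequality to a purely local statement at each prime via the multiplicativity of $d$. Write $A=n$, $B=n+h$, $C=n-k$, so that the three pairwise differences are $B-A=h$, $A-C=k$ and $B-C=h+k$, all nonzero by hypothesis. For a prime $p$ set $\alpha=v_p(A)$, $\beta=v_p(B)$, $\gamma=v_p(C)$ and $a=v_p(h)$, $b=v_p(k)$, $c=v_p(h+k)$, where $v_p$ denotes the $p$-adic valuation. Since $d(m)=\prod_p\bigl(v_p(m)+1\bigr)$, the claimed bound $d(A)d(B)d(C)\le d(h)d(k)d(h+k)\,d(ABC)$ will follow, after taking the product over all $p$, from the pointwise inequality
\begin{equation}\label{local-ineq}
(\alpha+1)(\beta+1)(\gamma+1)\le (a+1)(b+1)(c+1)(\alpha+\beta+\gamma+1),
\end{equation}
valid for every prime $p$, because $v_p(ABC)=\alpha+\beta+\gamma$.

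The constraints that power \eqref{local-ineq} come from the elementary fact that a common divisor of two integers divides their difference. Thus $\gcd(A,B)\mid(B-A)=h$ gives $p^{\min(\alpha,\beta)}\mid h$, i.e. $\min(\alpha,\beta)\le a$; similarly $\gcd(A,C)\mid k$ gives $\min(\alpha,\gamma)\le b$, and $\gcd(B,C)\mid(h+k)$ gives $\min(\beta,\gamma)\le c$. When $p$ divides none of $h,k,h+k$ we have $a=b=c=0$, and these force at most one of $\alpha,\beta,\gamma$ to be positive; in that case both sides of \eqref{local-ineq} coincide. Hence the factor $d(h)d(k)d(h+k)$ is genuinely needed only at the finitely many primes dividing $hk(h+k)$.

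To verify \eqref{local-ineq} in general I would argue by cases according to which of $\alpha,\beta,\gamma$ is largest, the crux being that the two \emph{smaller} valuations are each controlled by one of $a,b,c$. For instance, if $\alpha=\max(\alpha,\beta,\gamma)$ then $\beta=\min(\alpha,\beta)\le a$ and $\gamma=\min(\alpha,\gamma)\le b$, so $(\beta+1)(\gamma+1)\le (a+1)(b+1)$, while $(\alpha+1)\le(\alpha+\beta+\gamma+1)$; since $c\ge 0$, multiplying these yields \eqref{local-ineq}. The cases $\beta=\max$ and $\gamma=\max$ are entirely analogous after matching the relevant pair to its difference (using $a,c$ and $b,c$ respectively). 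Taking the product of \eqref{local-ineq} over all primes and invoking multiplicativity then gives the lemma. This proof is genuinely elementary, so there is no serious obstacle; the only point demanding care is the bookkeeping that ties the pairs $(A,B),(A,C),(B,C)$ to the differences $h,k,h+k$, after which the case analysis is routine.
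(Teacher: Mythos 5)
Your proof is correct. You verify, at each prime $p$, the pointwise inequality $(\alpha+1)(\beta+1)(\gamma+1)\le(a+1)(b+1)(c+1)(\alpha+\beta+\gamma+1)$ for the valuations $\alpha,\beta,\gamma$ of $n,n+h,n-k$ and $a,b,c$ of $h,k,h+k$, using $\min(\alpha,\beta)\le a$, $\min(\alpha,\gamma)\le b$, $\min(\beta,\gamma)\le c$ (a common divisor of two integers divides their difference) together with a case split on which of $\alpha,\beta,\gamma$ is largest; multiplying over all primes gives \eqref{inequality-a}. The paper reaches the same conclusion without any case analysis: from the inclusion--exclusion identity \eqref{max-min} for $\max\{a,b,c\}$ it derives the exact formula \eqref{id}, valid for an arbitrary multiplicative function $f$, namely $f(u)f(v)f(w)=f([u,v,w])f((u,v))f((u,w))f((v,w))/f((u,v,w))$, and then bounds $d((n,n+h))\le d(h)$, $d((n,n-k))\le d(k)$, $d((n+h,n-k))\le d(h+k)$, $d([n,n+h,n-k])\le d(n(n+h)(n-k))$ and $d((n,n+h,n-k))\ge 1$. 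The two arguments rest on identical arithmetic inputs, so yours is essentially the localization of the paper's identity at a single prime, with a three-way case analysis replacing the exact formula. What the paper's packaging buys is generality (the identity holds for any multiplicative $f$) and a record of exactly where the loss occurs, in the discarded factor $d((n,n+h,n-k))^{-1}\le 1$; what yours buys is that nothing needs to be an identity, only an inequality, which is arguably more transparent. One small bookkeeping point, common to both proofs: the statement is to be read with $n(n+h)(n-k)\neq 0$ (equivalently, with all valuations finite), and with $d$ evaluated at absolute values for negative arguments.
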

	
	\begin{proof}
		As usual, we denote by $(u,v)$ and $[u,v]$,
		the gcd and lcm respectively of the integers $u,v$ and
		by $(u,v,w)$ and $[u,v,w]$,
		the gcd and lcm respectively of the integers $u,v,w$.
		Given three non-negative integers $a,b,c$,
		it is evident that
		\begin{equation}\label{max-min}
			\max\{a,b,c\} = a+b+c - \min\{a,b\}-\min\{a,c\} -\min \{b,c\} + \min\{a,b,c\}.
		\end{equation}
		The easiest way to see this is by setting
		$S_r=\{1, \ldots, r\}$ so that the assertion
		is equivalent to
		$$|S_a\cup S_b\cup S_c| = |S_a|+|S_b| +|S_c|
		- |S_a\cap S_b| - |S_a\cap S_c| - |S_b\cap S_c| + |S_a\cap S_b \cap S_c| ,$$
		which follows from the inclusion-exclusion
		principle.  The equality (\ref{max-min})
		implies that for any multiplicative arithmetical
		function $f$, we have 
		\begin{equation}\label{id}
			f(u)f(v)f(w) = {f([u,v,w])f((u,v))f((u,w))f((v,w)) \over f((u,v,w))},
		\end{equation}
		generalizing the familiar identity for two variables,
		$f([u,v])f(u,v)=f(u)f(v),$
		which can be derived using the fact that
		$\max\{a,b\}=a+b-\min\{a,b\}$.
		Applying (\ref{id}) to the divisor function gives
		$$ d(n)d(n+h)d(n-k) = {d([n,n+h,n-k]) d((n,n+h))d((n, n-k))d((n+h,n-k)) \over d((n,n+h,n-k))}.$$
		Thus,
		$$ d(n)d(n+h)d(n-k)\leq d(h)d(k)d(h+k)d(n(n+h)(n-k)),$$
		as claimed.
	\end{proof}
	
	\subsection{Results from multiple Dirichlet series theory}
	
	We now recall the setup of arithmetic functions of several variables and multiple Dirichlet series.
	It seems that arithmetic functions of several variables were first discussed by Vaidyanathaswamy \cite{RV} in 1931.
	He defined a multiplicative function of several variables as a function $f :\mathbb{N}^k\to \mathbb{C}$ that satisfies the property
	$$
	f(m_1n_1,\ldots,m_kn_k) = f(m_1,\ldots,m_k)f(n_1,\ldots,n_k)
	$$ 
	when $\gcd(m_1\cdots m_k, n_1\cdots n_k) = 1$. 
	In the one variable case, we know that Dirichlet convolution of two multiplicative functions is again a multiplicative function.
	The same can be found true if we define the Dirichlet convolution of two arithmetic functions of several variables $f$ and $g$ as
	$$
	(f\star g)(n_1,\ldots,n_k) := \sum_{\substack{d_i|n_i \\ 1\leq i\leq k}} f(d_1,\ldots,d_k)g(n_1/d_1,\ldots,n_k/d_k).
	$$
	In the context of determining the average order of arithmetic functions of several variables,  de la Bret\`eche \cite{RB}
	derived a multi-variable version of the 
	classical Tauberian theorems. To state this
	version, we need the following notations.
	
	\subsection*{Notations}
	Let $\R^+$ denote the set of all non-negative real numbers and $\R_*^+$ denote the set of all positive real numbers.
	For a positive integer $m$, we denote an $m$-tuple $(s_1,\ldots,s_m)$ of complex numbers by $\pmb{s}$. Let $\tau_j=\Im(s_j)$ and
	$\mathcal{L}_m(\mathbb{C})$ be the space of all linear forms on $\mathbb{C}^m$ over $\mathbb{C}$.
	We denote by $\left\{e_j\right\}_{j=1}^{m}$, the canonical basis of $\mathbb{C}^m$ and $\left\{e_j^*\right\}_{j=1}^{m}$,
	the dual basis in $\mathcal{L}_m(\mathbb{C})$. Let $\mathcal{L\mathbb{R}}_m(\mathbb{C})$
	(respectively $\mathcal{L\mathbb{R}}^+_m(\mathbb{C})$) denote the set of linear forms of $\mathcal{L}_m(\mathbb{C})$
	whose restriction to $\mathbb{R}^m$ (respectively to $(\mathbb{R^+})^m$) has values in $\mathbb{R}$
	(respectively, in $\mathbb{R^+}$). Let $\beta_j>0$ for all $j=1,\ldots,m$. Then we denote by $\mathcal{B}$
	the linear form $\sum_{j=1}^{m}\beta_je_j^*$ and $\pmb{\beta}=(\beta_1,\ldots,\beta_m)$ be the associated row matrix.
	We define $X^{\pmb{\beta}} := (X^{\beta_1},\ldots,X^{\beta_m})$. Let $\mathcal{L}$ be a family of linear forms and
	for this we define $\text{conv}(\mathcal{L}) := \sum_{\ell\in \mathcal{L}}\mathbb{R^+}\ell$ and
	$\text{conv}^*(\mathcal{L}) := \sum_{\ell\in \mathcal{L}}\mathbb{(R^*)^+}\ell$.
	With these notations in place, \cite[Th\'eor\`eme 1]{RB} reads as follows:
	
	\begin{thm}\label{theorem 1, breteche}
		Let $f$ be an arithmetic function on $\mathbb{N}^m$ taking positive values and $F$ be the associated Dirichlet series
		$$
		F(\pmb{s}) = \sum_{d_1=1}^{\infty}\ldots\sum_{d_m=1}^{\infty}\frac{f(d_1,\ldots,d_m)}{d_1^{s_1}\ldots d_m^{s_m}}. 
		$$	
		Suppose that there exists $\pmb{\alpha}=(\alpha_1,\ldots, \alpha_m) \in (\mathbb{R^+})^m$ such that $F$ satisfies the following three properties:
		\begin{enumerate}
			\item The series $F(\pmb{s})$ is absolutely convergent for $\pmb{s} \in \mathbb{C}^m$ such that $\Re(s_i)>{\alpha_i}$.
			\item There exists a family $\mathcal{L}$ of $n$ many non-zero linear forms $\mathcal{L}:= \left\{\ell^{(i)}\right\}_{i=1}^{n}$
			in $\mathcal{L\mathbb{R}}^+_m(\mathbb{C})$ and a family of finitely many linear forms $\left\{h^{(k)}\right\}_{k\in \mathcal{K}}$
			in $\mathcal{L\mathbb{R}}^+_m(\mathbb{C})$, such that the function $H$ from $\mathbb{C}^m$ to $\mathbb{C}$ defined by
			$$
			H(\pmb{s}) := F(\pmb{s}+\pmb{\alpha})\prod_{i=1}^{n}\ell^{(i)}(\pmb{s}) 
			$$
			can be extended to a holomorphic function in the domain
			$$
			\mathcal{D}(\delta_1,\delta_3) := \left\{\pmb{s}\in \mathbb{C}^m: \Re\left(\ell^{(i)}(\pmb{s})\right)>-\delta_1
			\ \text{for all} \ i \text{ and } \Re\left(h^{(k)}(\pmb{s})\right)>-\delta_3 \text{ for all } k \in \mathcal K \right\}
			$$
			for some $\delta_1,\delta_3>0$.
			\item There exists $\delta_2>0$ such that for every $\epsilon,\epsilon{'}>0$, the upper bound
			$$
			|H(\pmb{s})| \ll \left(1+||\Im(\pmb{s})||_{1}^{\epsilon}\right)
			\prod_{i=1}^{n} \left(|\Im\left(\ell^{(i)}(\pmb{s})\right)|+1\right)^{1-\delta_2\min\left\{0,\Re\left(\ell^{(i)}(\pmb{s})\right)\right\}}
			$$
			is uniformly valid in the domain $\mathcal{D}\left(\delta_1-\epsilon{'},\delta_3-\epsilon{'}\right)$.  
		\end{enumerate}
		Let $J(\pmb{\alpha}) := \left\{j \in \{1,\ldots,m\} : \alpha_j=0\right\}$.
		Let $r:= |J(\pmb{\alpha})|$ and $\ell^{(n+1)},\ldots,\ell^{(n+r)}$ be the linear forms $e_j^{*}$ for $j \in J(\pmb{\alpha})$.
		Then for ${\pmb \beta}=(\beta_1,\ldots,\beta_m) \in (\mathbb{R^+})^m$,
		there exists a polynomial $Q_{\pmb\beta}\in \mathbb{R}[X]$ of degree less than or equal to
		$n+r-rank\left(\left\{\ell^{(i)}\right\}_{i=1}^{n+r}\right)$ and a real number
		$\theta=\theta\left(\mathcal{L},\left\{h^{(k)}\right\}_{k\in \mathcal{K}},\delta_1,\delta_2,\delta_3,\pmb{\alpha},\pmb{\beta}\right)>0$
		such that we have, for $X\ge1$,	
		$$
		S(X^{\pmb\beta}) := \sum_{1\le d_1\le X^{\beta_1}}\ldots\sum_{1\le d_m\le X^{\beta_m}}f(d_1,\ldots,d_m)
		=X^{\langle\pmb{\alpha},\pmb{\beta}\rangle}\left(Q_{\pmb\beta}(\log X)+O(X^{-\theta})\right).
		$$
	\end{thm}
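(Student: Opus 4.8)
The plan is to establish this multivariable Tauberian statement by an $m$-dimensional Perron--Mellin argument combined with contour shifting, generalizing the classical one-variable method. First I would perform the translation $\pmb{s}\mapsto\pmb{s}+\pmb{\alpha}$, which recenters all the analysis at the origin and factors out the expected main power $X^{\langle\pmb{\alpha},\pmb{\beta}\rangle}$; after this substitution the relevant Dirichlet series is $F(\pmb{s}+\pmb{\alpha})$, whose polar behavior is, by hypothesis (2), entirely captured by the product $\prod_{i=1}^{n}\ell^{(i)}(\pmb{s})^{-1}$ through the holomorphic function $H$. The summatory function would then be represented by an iterated Mellin--Perron integral
$$S(X^{\pmb\beta}) = X^{\langle\pmb{\alpha},\pmb{\beta}\rangle}\frac{1}{(2\pi i)^m}\int_{(\kappa_1)}\cdots\int_{(\kappa_m)} F(\pmb{s}+\pmb{\alpha})\prod_{j=1}^{m}\frac{X^{\beta_j s_j}}{s_j}\,ds_1\cdots ds_m$$
with each $\kappa_j>0$ small. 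To control convergence in several variables one works with a suitably smoothed or effectively truncated kernel rather than the sharp factors $1/s_j$; the error introduced by the smoothing is absorbed into the final $O(X^{-\theta})$ term.

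The heart of the argument is to shift each contour to the left of the origin into the holomorphy domain $\mathcal{D}(\delta_1,\delta_3)$ supplied by hypothesis (2). As the contours cross the hyperplane arrangement formed by the forms $\ell^{(1)},\ldots,\ell^{(n)}$ (the poles of $F(\pmb{s}+\pmb{\alpha})$) together with the coordinate forms $e_j^{*}=\ell^{(n+j)}$ for $j\in J(\pmb{\alpha})$ (the Perron poles $1/s_j$ that survive precisely when $\alpha_j=0$), one collects multidimensional residues. The dominant contribution comes from the residue at the deepest intersection point, the origin, where the order of the pole is governed by the codimension of the intersection; a linear-algebra count shows that this order, and hence the degree of the resulting polynomial $Q_{\pmb\beta}$ in $\log X$, is bounded by $n+r-\mathrm{rank}\left(\left\{\ell^{(i)}\right\}_{i=1}^{n+r}\right)$, exactly as claimed. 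Each Perron factor $X^{\beta_j s_j}$ expanded near $s_j=0$ contributes powers of $\log X$ through its Taylor series, which is the source of the logarithmic polynomial.

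Finally I would bound the remaining shifted integral, with all contours on a face where $\Re\left(\ell^{(i)}(\pmb{s})\right)=-\delta$ for some small $\delta>0$ inside $\mathcal{D}(\delta_1-\epsilon',\delta_3-\epsilon')$. Here hypothesis (3) is essential: the growth bound, with its exponent $1-\delta_2\min\left\{0,\Re\left(\ell^{(i)}(\pmb{s})\right)\right\}$, is calibrated precisely so that the integrand decays fast enough in the imaginary directions for the shifted integral to converge absolutely, while the factor $X^{-\delta\beta_j}$ gained in pushing the contours past the origin yields the power saving encoded in $X^{-\theta}$.

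I expect the main obstacle to be the multidimensional residue calculus at the non-generic hyperplane arrangement. Unlike the one-variable case, when several of the forms $\ell^{(i)}$ meet along a positive-dimensional locus the residues must be taken iteratively, and the combinatorics of which subsets of hyperplanes contribute to each order of vanishing is delicate; organizing this so that the degree bound $n+r-\mathrm{rank}$ emerges cleanly, and verifying that the successive contour shifts can be carried out without the intermediate integrals diverging, is the technical core of the proof.
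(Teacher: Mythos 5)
The paper does not prove this statement at all: it is de la Bret\`eche's Th\'eor\`eme~1, imported verbatim from \cite{RB} as a black box (the text says only that ``\cite[Th\'eor\`eme 1]{RB} reads as follows''), so there is no in-paper proof to compare your attempt against. Judged against the source, your outline captures the correct global strategy: a Perron--Mellin representation with a smoothed kernel, contour shifts into the holomorphy domain supplied by hypothesis (2), residues at the forms $\ell^{(i)}$ and at the surviving Perron poles $e_j^*$ for $j\in J(\pmb{\alpha})$ producing the logarithmic polynomial, and hypothesis (3) calibrated to make the shifted integrals converge and yield the $O(X^{-\theta})$ saving. Your identification of the degree bound $n+r-\mathrm{rank}\left(\left\{\ell^{(i)}\right\}_{i=1}^{n+r}\right)$ as a codimension count at the origin is also the right heuristic. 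The main divergence from de la Bret\`eche's actual argument is structural: he does not carry out a genuinely $m$-dimensional residue calculus at the hyperplane arrangement, but instead integrates the variables one at a time, reducing each step to an effective one-variable Perron/Landau-type lemma with the remaining variables held as parameters, and handles the combinatorics of which subfamilies of linear forms contribute by induction. The step you single out as the technical core --- organizing simultaneous multidimensional residues at a degenerate arrangement --- is precisely what that iterative scheme is designed to avoid.

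As it stands your proposal is a program rather than a proof: the choice and control of the smoothed kernel uniformly in $m$ variables, the justification that each contour shift is legitimate under only the polynomial growth of hypothesis (3), the extraction of $Q_{\pmb{\beta}}$ with real coefficients, and the verification that a single $\theta>0$ works are all named but not executed, and none of them is routine in several variables. Since the theorem is quoted rather than proved in this paper, the appropriate course is to cite \cite{RB} for the proof; if you do want to reprove it, I would recommend following the variable-by-variable reduction rather than attempting the multidimensional residue computation head-on.
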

	
	We also need \cite[Th\'eor\`eme 2]{RB}.
	
	\begin{thm}\label{theorem 2, breteche}
		Let the notations be as in \thmref{theorem 1, breteche}. If we have
		$\mathcal{B}$ is not in the span of $\left\{\ell^{(i)}\right\}_{i=1}^{n+r}$, then $Q_{\pmb\beta}=0$.
		Next suppose, we have the following two conditions:
		\begin{enumerate}
			\item there exists a function $G$ such that $H(\pmb{s}) = G(\ell^{(1)}(\pmb{s}),\ldots,\ell^{(n+r)}(\pmb{s}))$;
			\item $\mathcal{B}$ is in the span of $\left\{\ell^{(i)}\right\}_{i=1}^{n+r}$ and there exists no strict
			subfamily $\mathcal{L'}$ of $\left\{\ell^{(i)}\right\}_{i=1}^{n+r}$ such that $\mathcal{B}$ is in the span of 
			$\mathcal{L'}$ with
			$$
			card(\mathcal{L'})-rank(\mathcal{L'}) = card\left(\left\{\ell^{(i)}\right\}_{i=1}^{n+r}\right)-rank\left(\left\{\ell^{(i)}\right\}_{i=1}^{n+r}\right).
			$$
		\end{enumerate}
		Then, for $X \ge 3$, the polynomial $Q_{\pmb \beta}$ satisfies the relation 
		$$
		Q_{\pmb\beta}(\log X) = C_0X^{-\langle\pmb{\alpha},\pmb{\beta}\rangle}I(X^{\pmb\beta})+O((\log X)^{\rho-1}),
		$$
		where $C_0:= H(0,\ldots,0), \rho:= n+r-rank\left(\left\{\ell^{(i)}\right\}_{i=1}^{n+r}\right)$, and
		$$
		I(X^{\pmb\beta}) := \int\int\ldots\int_{A(X^{\pmb \beta})} \frac{dy_1\ldots dy_n}{\prod_{i=1}^{n} y_i^{1-\ell^{(i)}(\pmb{\alpha})}} 
		$$ 
		with
		$$
		A(X^{\pmb{\beta}}) := \left\{\pmb{y} \in [1,\infty)^n: \prod_{i=1}^{n}y_i^{\ell^{(i)}(e_j)}\le X^{\beta_j}  \ \text{for all} \  j\right\}.
		$$ 
	\end{thm}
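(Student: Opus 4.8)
The plan is to derive \thmref{theorem 2, breteche} by sharpening the contour analysis that already underlies \thmref{theorem 1, breteche}, tracking not merely the degree of $Q_{\pmb\beta}$ but its leading coefficients. First I would record the multidimensional Perron representation: using the absolute convergence of hypothesis (1) to anchor the initial contours at $\Re(s_j)=\alpha_j+c$ for a small $c>0$, one writes $S(X^{\pmb\beta})$ as a smoothed $m$-fold Mellin inversion of $F(\pmb s)\prod_j X^{\beta_j s_j}s_j^{-1}$. Translating $\pmb s\mapsto \pmb s+\pmb\alpha$ pulls out the scale factor $X^{\langle\pmb\alpha,\pmb\beta\rangle}$, and the kernel factors $s_j^{-1}$ with $j\in J(\pmb\alpha)$ become the poles carried by the forms $e_j^*=\ell^{(n+1)},\ldots,\ell^{(n+r)}$. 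Since $F(\pmb s+\pmb\alpha)=H(\pmb s)/\prod_{i=1}^{n}\ell^{(i)}(\pmb s)$ with $H$ holomorphic on $\mathcal D(\delta_1,\delta_3)$, the whole problem reduces to controlling
$$\frac{1}{(2\pi i)^m}\int H(\pmb s)\,\frac{X^{\mathcal{B}(\pmb s)}}{\prod_{i=1}^{n+r}\ell^{(i)}(\pmb s)}\,d\pmb s,$$
where $\mathcal{B}(\pmb s)=\langle\pmb\beta,\pmb s\rangle$ and the growth bound of hypothesis (3) will justify all contour shifts and truncations.

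For the vanishing assertion I would argue by duality. The form $\mathcal{B}$ fails to lie in the span of $\{\ell^{(i)}\}_{i=1}^{n+r}$ precisely when there is a real direction $\pmb v$ with $\ell^{(i)}(\pmb v)=0$ for every $i$ but $\mathcal{B}(\pmb v)\neq 0$; replacing $\pmb v$ by $-\pmb v$ if necessary, assume $\mathcal{B}(\pmb v)<0$. Shifting every contour along $t\pmb v$ leaves each $\Re\ell^{(i)}(\pmb s)$ unchanged, so no denominator $\ell^{(i)}(\pmb s)$ crosses zero and the integrand stays holomorphic, while $\Re\mathcal{B}(\pmb s)=\Re\mathcal{B}(\pmb s_0)+t\,\mathcal{B}(\pmb v)\to-\infty$ forces $|X^{\mathcal{B}(\pmb s)}|\to 0$ as $t\to+\infty$. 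The polynomial bound on $|H|$ keeps the shifted integral convergent, so its value tends to $0$ and the coefficient polynomial $Q_{\pmb\beta}$ must be identically $0$.

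For the explicit evaluation I would use condition (1) to pass to the variables $u_i=\ell^{(i)}(\pmb s)$, which factorises the integrand through $G(\pmb u)$ and reduces the residue calculus to an arrangement-of-hyperplanes computation concentrated at $\pmb u=\pmb 0$, that is at $\pmb s=\pmb 0$; the value $G(\pmb 0)=H(\pmb 0)=C_0$ emerges as the constant. Condition (2) is exactly what guarantees that $\mathcal{B}$ ``sees'' the full pole: since no strict subfamily realises the same defect $\mathrm{card}-\mathrm{rank}$, the representation $\mathcal{B}=\sum_i\lambda_i\ell^{(i)}$ pins the top pole order at $\rho=n+r-\mathrm{rank}$ with no cancellation in the leading residue. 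Writing $y_i=X^{t_i}$ converts $\prod_i dy_i\,y_i^{\ell^{(i)}(\pmb\alpha)-1}$ into $(\log X)^n\prod_i X^{t_i\ell^{(i)}(\pmb\alpha)}\,dt_i$ over the polytope $\{\pmb t\in[0,\infty)^n:\sum_i t_i\ell^{(i)}(e_j)\le\beta_j\ \forall j\}$, which is exactly $A(X^{\pmb\beta})$; this identifies the leading residue with $C_0X^{-\langle\pmb\alpha,\pmb\beta\rangle}I(X^{\pmb\beta})$, while every contribution from a lower-dimensional face of the arrangement is absorbed into the $O((\log X)^{\rho-1})$ term.

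The hard part will be the uniform control of the contour shifts in several variables. Unlike the one-dimensional Tauberian setting, the $s_j$ cannot be moved independently: the hyperplanes $\ell^{(i)}(\pmb s)=0$ meet in intricate patterns, each residue is itself a lower-dimensional integral that must be estimated through hypothesis (3), and the tails have to be bounded uniformly in $\Im(\pmb s)$. Matching the inclusion--exclusion over the vanishing subsets of the $\ell^{(i)}$ to the clean geometric volume $I(X^{\pmb\beta})$ -- and proving that only the maximal-defect faces survive in the leading term -- is the delicate combinatorial--analytic crux, and it is precisely here that the minimality encoded in condition (2) does the essential work.
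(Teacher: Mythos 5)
The first thing to say is that the paper does not prove this statement at all: it is quoted, essentially verbatim (in translation), from de la Bret\`eche \cite[Th\'eor\`eme 2]{RB} and used as a black box, so there is no in-paper proof to compare your attempt against. The only meaningful comparison is with de la Bret\`eche's original argument, which is a substantial piece of work proceeding by induction on the number of variables via iterated \emph{one-dimensional} effective Perron integrals and a careful analysis of the resulting polynomial, rather than by a single $m$-fold contour shift.

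On the merits of your sketch: the global picture (Mellin inversion, residues supported on the hyperplane arrangement $\ell^{(i)}(\pmb{s})=0$, duality for the vanishing statement, and the substitution $y_i=X^{t_i}$ identifying the candidate leading term with $C_0X^{-\langle\pmb{\alpha},\pmb{\beta}\rangle}I(X^{\pmb\beta})$) is the right heuristic, but there are two concrete gaps. First, in the vanishing argument you shift along $t\pmb{v}$ with $t\to+\infty$; this is not available, because the auxiliary forms $h^{(k)}$ need not vanish on $\pmb{v}$, and holomorphy together with the growth bound of hypothesis (3) are only guaranteed in $\mathcal{D}(\delta_1,\delta_3)$, i.e.\ where $\Re(h^{(k)}(\pmb{s}))>-\delta_3$. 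Only a bounded shift is permitted; that still yields a saving $X^{t\mathcal{B}(\pmb{v})}=X^{-\theta}$ for some fixed $\theta>0$, which suffices, but the argument must be restated in that form. Second, and more seriously, the entire content of the theorem lives in the step you explicitly defer: showing that only subfamilies of maximal defect $\mathrm{card}-\mathrm{rank}$ contribute at order $(\log X)^{\rho}$, that condition (2) forces a unique such contribution with constant $G(\pmb{0})=C_0$ and no cancellation, and that every other residue --- each itself a lower-dimensional integral that must be re-estimated through hypothesis (3) --- is $O((\log X)^{\rho-1})$. Without that combinatorial--analytic bookkeeping the proposal is a roadmap rather than a proof, and the honest course here is the one the authors took, namely to cite \cite{RB}.
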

	
	\subsection{Results from theory of smooth numbers}	
	We now recall some key results on smooth numbers.
	Let $\S(x,y)$ be the set of natural numbers less than $x$
	that have all their prime factors less than $y$.
	Such numbers are called $y$-smooth.  The size of $\S(x,y)$
	is traditionally denoted by $\Psi(x,y)$.  
	The results we record here can all be found in Chapter III.5 of
	Tenenbaum's book \cite{tenenbaum}.
	The first assertion was first proved by Canfield, 
	Erd\H{o}s and Pomerance in \cite{canfield} (see Corollary
	on page 15).  The second assertion originates in a 1938 paper of Rankin \cite{rankin} and his method of proof is
	often dubbed as ``Rankin's trick.''  The elementary nature
	of this trick is explained well in Granville's survey
	article \cite{granville}.
	\begin{prop}\label{prop-smooth}
		Letting
		$$ u = {\log x \over \log y}, $$
		we have 
		$$\Psi(x,y) \ll x \exp\left(-{1\over 2}u\log u\right),$$
		as $x \to \infty$, provided that for any fixed $\epsilon >0$,
		we have
		$$1 \leq u \leq (1-\epsilon){\log x \over \log\log x}.$$
		Also, for any $A>0$,
		$$\Psi(x, (\log x)^A)\ll x^{1-{1\over A}}\exp\left( {c\log x \over \log \log x}\right),$$  
		as $x \to \infty$, for a positive constant $c$.
	\end{prop}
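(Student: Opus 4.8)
The plan is to derive both estimates from a single device, \emph{Rankin's trick}, which majorizes the characteristic function of the $y$-smooth numbers by a convergent Dirichlet series. For any real $\sigma>0$, since $(x/n)^{\sigma}\ge 1$ whenever $n\le x$, one has
\[
\Psi(x,y)=\sum_{n\in\S(x,y)}1\le\sum_{\substack{n\ge1\\ P(n)\le y}}\left(\frac{x}{n}\right)^{\sigma}=x^{\sigma}\prod_{p\le y}\left(1-p^{-\sigma}\right)^{-1},
\]
where $P(n)$ denotes the largest prime factor of $n$ and the Euler product is finite. Taking logarithms, the task reduces to minimizing
\[
\Lambda(\sigma):=\sigma\log x-\sum_{p\le y}\log\!\left(1-p^{-\sigma}\right)
\]
over $\sigma\in(0,1)$ and then estimating the Euler sum by the prime number theorem. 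I would record the Mertens-type bound $-\sum_{p\le y}\log(1-p^{-\sigma})\le\sum_{p\le y}\frac{p^{-\sigma}}{1-p^{-\sigma}}$ and, writing $\sigma=1-\alpha$, evaluate $\sum_{p\le y}p^{\alpha-1}$ by partial summation against $\pi(t)$, obtaining the leading contribution $y^{\alpha}/(\alpha\log y)$ up to lower-order terms; all of the standard inputs are collected in \cite[Ch.\ III.5]{tenenbaum}.

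For the first bound I would choose $\sigma$ at (or near) the saddle point determined by $\Lambda'(\sigma)=0$, i.e.\ $\log x=\sum_{p\le y}\frac{\log p}{p^{\sigma}-1}$, equivalently $\alpha\log y\asymp\log u$ on the scale $y^{\alpha}\asymp u$. Substituting this choice back into $\Lambda$ and simplifying with the estimate above yields $\Lambda(\sigma)\le\log x-u\log u\,(1+o(1))$, which already gives the Canfield--Erd\H{o}s--Pomerance shape $\Psi(x,y)\ll x\exp(-u\log u\,(1+o(1)))$; weakening to the safe constant $\tfrac12$ in front of $u\log u$ then gives the stated inequality. The hypothesis $1\le u\le(1-\epsilon)\log x/\log\log x$ is exactly what guarantees that the optimal $\sigma$ lies strictly between $0$ and $1$ and that the partial-summation estimate of the Euler sum holds uniformly; this uniform control of the error term across the whole admissible range is the step I expect to be most delicate.

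For the second bound I would instead make the explicit choice $\sigma=1-\tfrac1A$, which lies in $(0,1)$ in the relevant regime $A>1$, so that $x^{\sigma}=x^{1-1/A}$ matches the claimed main factor. With $y=(\log x)^{A}$ one has $\log y=A\log\log x$ and $p^{-\sigma}=p^{1/A}/p$, whence partial summation gives
\[
-\sum_{p\le y}\log\!\left(1-p^{-\sigma}\right)\ll\sum_{p\le y}\frac{p^{1/A}}{p}\ll\frac{y^{1/A}}{(1/A)\log y}=\frac{\log x}{\log\log x}.
\]
Feeding this into Rankin's inequality produces $\Psi(x,(\log x)^{A})\ll x^{1-1/A}\exp\!\left(c\log x/\log\log x\right)$ with an admissible constant $c$, as asserted; this is precisely the mechanism behind \cite{rankin}, whose elementary character is explained in \cite{granville}. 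The only genuine subtlety here is again the accuracy of the prime-counting estimate near the top of the range $p\le y$, which fixes the value of $c$ but not the shape of the bound.
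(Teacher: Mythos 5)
The paper itself offers no proof of this proposition: it is quoted from the literature, with the first bound attributed to Canfield--Erd\H{o}s--Pomerance \cite{canfield} and the second to Rankin \cite{rankin} (see also \cite[Ch.~III.5]{tenenbaum}). Your treatment of the \emph{second} bound is correct and is precisely the intended argument: with $\sigma=1-1/A$ and $y=(\log x)^A$ one has $y^{1/A}=\log x$ and $\log y=A\log\log x$, Chebyshev's bound plus partial summation give $\sum_{p\le y}p^{1/A-1}\ll_A \log x/\log\log x$, and since $p^{-\sigma}\le 2^{-\sigma}<1$ this also controls $-\sum_{p\le y}\log(1-p^{-\sigma})$. (Your implicit restriction to $A>1$ is in fact necessary: as printed, for $0<A<1$ the right-hand side tends to $0$ while $\Psi\ge 1$; only $A=2$ is used later, so this is harmless.)

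The \emph{first} bound, however, cannot be extracted from Rankin's trick alone, and the key inequality you assert at the saddle point, $\Lambda(\sigma)\le\log x-u\log u\,(1+o(1))$, is false in a large part of the admissible range. The obstruction is that for every $\sigma\le 1$ one has $-\log(1-p^{-\sigma})\ge p^{-\sigma}\ge p^{-1}$, hence $\Lambda(\sigma)\ge\sigma\log x+\log\log y-O(1)$: the Rankin majorant always carries a factor $\gg\log y$. Writing $1-\sigma=\xi/\log y$, the Euler sum equals $\log\log y+E(\xi)+O(1)$ with $E(\xi)\ge c\,e^{\xi}/(\xi+1)-O(1)$, so for every $\sigma\in(0,1]$ one has
$$
x^{\sigma}\prod_{p\le y}\bigl(1-p^{-\sigma}\bigr)^{-1}\ \gg\ x\,(\log y)\,\exp\bigl(-u\log u-u\log\log u-O(u)\bigr),
$$
and this is $\le x\exp\bigl(-\tfrac12 u\log u\bigr)$ only when $u\log u\gg\log\log y$. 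The hypothesis permits, say, $u=\log\log\log x$, where $\log\log y\sim\log\log x$ while $u\log u=o(\log\log x)$; there the Rankin bound is worse than the trivial bound $\Psi(x,y)\le x$ for \emph{every} choice of $\sigma$, yet the asserted estimate is still true (indeed $\Psi(x,y)\sim x\rho(u)$ in that regime). Throughout the range where $u\to\infty$ but $u\log u=o(\log\log x)$, the first assertion genuinely requires the Canfield--Erd\H{o}s--Pomerance theorem (or the de Bruijn--Hildebrand asymptotic $\Psi(x,y)\sim x\rho(u)$), whose proofs proceed by induction on $u$ via the Buchstab--de Bruijn functional equation rather than by a Dirichlet-series majorization. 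The correct move here is the paper's: quote \cite{canfield} for the first bound and reserve Rankin's trick for the second.
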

	
	These results are  very useful.
	In particular, we can apply them to derive the following
	variants of Lemma \ref{ant}.
	
	\begin{lem}\label{smooth-new} 
		For a fixed $\delta \in (0,1)$, $x$ large enough, we have
		$$
		\sum_{\substack{x^{\delta}<d\le x\\ d\in \S(x;(\log x)^2)}}\frac{3^{\omega(d)}}{d} \ll x^{-\frac\delta 2 +\epsilon},
		$$
		for any $\epsilon>0$. Moreover, for a positive integer $t$ such that $x^{1/t}\ge(\log x)^2$, we have
		$$
		\sum_{\substack{x^{\delta}<d\le x\\ d\in \S(x;x^{1/t})}}\frac{3^{\omega(d)}}{d} \ll (\log x)^3\exp(-c t\log t),
		$$
		for some positive constant $c$.
	\end{lem}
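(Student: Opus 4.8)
The plan is to handle the two estimates separately, in each case deciding whether to discard or retain the weight $3^{\omega(d)}$ according to how much saving the target demands, and then to combine the appropriate bound from Proposition~\ref{prop-smooth} with either partial summation or Rankin's trick. Throughout, I write $P^+(d)$ for the largest prime factor of $d$, so that $d\in\S(x;y)$ means $d<x$ and $P^+(d)<y$.

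For the first estimate the target $x^{-\delta/2+\epsilon}$ already absorbs any power of $\log x$, so I would first discard the weight by the crude bound $3^{\omega(d)}\ll_\epsilon d^\epsilon$, reducing matters to a bound for $\sum_{x^\delta<d\le x,\,d\in\S(x;(\log x)^2)}d^{-1+\epsilon}$. The key analytic input is a \emph{uniform} upper bound for $\Psi(t,(\log x)^2)$ as $t$ ranges over $[x^\delta,x]$. Writing $(\log x)^2=(\log t)^{A(t)}$ with $A(t)=2\log\log x/\log\log t$, one checks that $A(t)\in[2,2+o(1)]$ uniformly on this range, since $\log\log t\ge\log(\delta\log x)$. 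The second bound of Proposition~\ref{prop-smooth} then yields $\Psi(t,(\log x)^2)\ll t^{1-1/A(t)}\exp(c\log t/\log\log t)\ll t^{1/2+\epsilon}$ uniformly. Feeding this, together with the weight bound, into a partial summation applied to $M(t):=\sum_{d\le t,\,d\in\S(x;(\log x)^2)}3^{\omega(d)}\ll t^{1/2+\epsilon}$, the boundary term at $t=x^\delta$ and the integral $\int_{x^\delta}^x M(t)\,t^{-2}\,dt$ both contribute $\ll x^{-\delta/2+\epsilon}$, while the boundary term at $t=x$ is the smaller $x^{-1/2+\epsilon}$. The essential point is that using merely $M(t)\le M(x)$ would be too lossy; it is the uniform-in-$t$ bound that produces the exponent $-\delta/2$.

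For the second estimate the factor $(\log x)^3$ must survive, so the crude weight bound is no longer admissible, and I would instead apply Rankin's trick directly, extracting the saving from the lower constraint $d>x^\delta$. For $\sigma\in(0,1)$ one has $\mathbf{1}[d>x^\delta]\le(d/x^\delta)^\sigma$, whence
\[
\sum_{\substack{x^\delta<d\le x\\ P^+(d)<x^{1/t}}}\frac{3^{\omega(d)}}{d}\le x^{-\delta\sigma}\sum_{P^+(d)<x^{1/t}}\frac{3^{\omega(d)}}{d^{1-\sigma}}=x^{-\delta\sigma}\prod_{p<x^{1/t}}\left(1+\frac{3}{p^{1-\sigma}-1}\right).
\]
Using $\log(1+u)\le u$ together with $3/(p^{1-\sigma}-1)\ll p^{\sigma-1}$ and the Mertens-type estimate $\sum_{p\le y}p^{\sigma-1}\ll y^\sigma/(\sigma\log y)$ (valid once $\sigma\log y\gg1$), the logarithm of the right-hand side is $\le-\delta\sigma\log x+O\!\left(y^\sigma/(\sigma\log y)\right)$ with $y=x^{1/t}$. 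Setting $w=\sigma\log y$, so that $\sigma\log x=tw$ and $y^\sigma=e^w$, this becomes $-\delta tw+O(e^w/w)$, minimized near $w=\log(\delta t)$ with minimal value $-\delta t\log t+O(t)$. Hence the sum is $\ll\exp(-ct\log t)\le(\log x)^3\exp(-ct\log t)$ for a suitable $c>0$, once $t$ exceeds an absolute constant $t_0$; the constraint $x^{1/t}\ge(\log x)^2$ forces $t\le\log x/(2\log\log x)$, which keeps the optimal $\sigma\approx t\log(\delta t)/\log x$ below $1$ and legitimizes the Mertens estimate. For the remaining bounded range $t\le t_0$ the desired inequality is immediate from $\sum_{d\le x}3^{\omega(d)}/d\ll(\log x)^3$ in Lemma~\ref{ant}, since $\exp(-ct\log t)$ is then bounded below by a positive constant.

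The main obstacle is the Rankin optimization in the second part: one must check that the chosen $\sigma$ lies in $(0,1)$ across the whole admissible range of $t$ and, more delicately, that the Euler product is estimated sharply enough — using the precise $y^\sigma/(\sigma\log y)$ rather than the cruder $y^\sigma\log\log y$ — so that no spurious $\log\log\log x$ factor contaminates the exponent $-t\log t$. A secondary technical point, in the first part, is confirming that Proposition~\ref{prop-smooth} applies with the slowly varying exponent $A(t)$ uniformly in $t$, which holds because $A(t)$ remains in a bounded interval contracting to $\{2\}$.
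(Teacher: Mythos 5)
The paper itself only sketches this lemma in one line (Proposition~\ref{prop-smooth} plus partial summation), and your first estimate follows exactly that route: the uniform bound $\Psi(t,(\log x)^2)\ll t^{1/2+\epsilon}$ for $t\in[x^\delta,x]$, obtained by writing $(\log x)^2=(\log t)^{A(t)}$ with $A(t)\to 2$, combined with $3^{\omega(d)}\ll d^{\epsilon}$ and partial summation, is correct and is the point you rightly identify as essential (using $M(t)\le M(x)$ would indeed lose the factor $x^{-\delta/2}$). For the second estimate you replace partial summation against $\Psi(\cdot,x^{1/t})$ by Rankin's trick applied directly to the weighted Dirichlet series $\sum 3^{\omega(d)}d^{-(1-\sigma)}$; this is a legitimate and arguably cleaner way to carry the weight $3^{\omega(d)}$ through the argument, and the optimization $w=\sigma\log y\approx\log(\delta t)$ together with the separate treatment of bounded $t$ via Lemma~\ref{ant} is structured correctly.

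There is, however, one step that fails as stated: the estimate $\sum_{p\le y}p^{\sigma-1}\ll y^{\sigma}/(\sigma\log y)$ is \emph{not} valid under the mere hypothesis $\sigma\log y\gg 1$. Writing $\sum_{p\le y}p^{\sigma-1}=\sum_{p\le y}p^{-1}+\sum_{p\le y}(p^{\sigma}-1)p^{-1}$, the first term is $\log\log y+O(1)$, and this survives: the correct bound is
$$
\sum_{p\le y}p^{\sigma-1}\;\le\;\log\log y+O(1)+O\!\left(\frac{y^{\sigma}}{\sigma\log y}\right),
$$
and in your regime $w=\log(\delta t)$ one has $e^{w}/w=\delta t/\log(\delta t)$, which for bounded or slowly growing $t$ is far smaller than $\log\log y\asymp\log\log x$. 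Consequently your intermediate claim that the sum is $\ll\exp(-ct\log t)$ \emph{without} any logarithmic factor is false for such $t$ (for fixed $t$ the Euler product genuinely has size $\asymp(\log y)^{3}$). You have the delicacy exactly backwards: it is not that the ``cruder'' bound introduces a spurious factor, but that the ``precise'' one you invoke is invalid here, and the $3\log\log y$ it omits exponentiates to precisely the factor $(\log x)^{3}$ that the lemma's statement allows. The repair is immediate — insert the corrected Mertens estimate and the conclusion becomes $\ll(\log x)^{3}\exp(-\delta tw+O(e^{w}/w))\ll(\log x)^{3}\exp(-ct\log t)$ as required — so the lemma is recovered, but the step as written would not survive scrutiny.
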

	
	\begin{proof} This can be seen as an application of Proposition \ref{prop-smooth} along with partial summation.  
	\end{proof}


	\section{Proof of \thmref{main}}
	
	Before embarking on the proof proper, we exhibit an
	elementary proof of the fact that $S(x;h) \gg_h x (\log x)^3$.  
	We first assume that $h$ is even. Note that if $n$ is coprime
	to $h$, then the numbers $n, n\pm h$ are mutually coprime.
	Therefore, in this case
	$$S(x;h) \geq \sum_{\substack{n\leq x\\ (n,h)=1} }
	d(n)d(n-h)d(n+h) = \sum_{\substack{n\leq x\\ (n,h)=1} }
	d(n(n^2-h^2)).$$
	Any divisor of $n(n^2-h^2)$ factors uniquely as $d_1d_2d_3$ with $d_1|n$, $d_2|(n+h)$ and $d_3|(n-h)$.
	In counting the divisors of $n(n^2-h^2)$,
	we can restrict to counting only divisors $d_1, d_2, d_3$ less
	than $x^{1/3}$ to obtain a lower bound for our
	sum under consideration.  By the coprimality of the terms,
	we must have $n$ belonging to a unique progression 
	(mod $d_1d_2d_3$).  
	Thus, 
	$$ S(x;h) \geq \sumdash_{d_1, d_2, d_3 \leq x^{1/3}} \left(
	{x\over d_1d_2d_3} + O(1) \right), $$
	where the prime on the sum indicates that $d_1, d_2,d_3$
	are coprime to $h$.  This shows that $S(x;h)\gg_h x(\log x)^3.$
	
	Now if $h$ is odd, we consider only even integers $n$ which are coprime
	to $h$. Then the numbers $n, n\pm h$ are mutually coprime.
	Therefore, in this case
	$$S(x;h) \geq \sum_{\substack{\text{even }n\leq x\\ (n,h)=1} }
	d(n)d(n-h)d(n+h) = \sum_{\substack{\text{even }n\leq x\\ (n,h)=1} }
	d(n(n^2-h^2)).$$  
	We can then get a similar estimate as above by choosing the divisor $d_1$ of $n$ to be even.

	To derive the finer result stated in 
	Theorem \ref{main}, we proceed as follows.
	Without loss of generality, we can assume that $h \le x/2$.
	We write the triple convolution sum of the divisor function as follows
	\begin{align*}
		S(x;h) =  \sum_{n\le x}d(n)d(n-h)d(n+h)= \sum_{n\le x}\left( \sum_{u\mid n+h}1\right)\left( \sum_{v\mid n}1\right)\left( \sum_{w\mid n-h}1\right)
		= \sum\limits_{\substack{u\le x+h \\ v\le x \\ w\le x-h}} \sumdash_{n\le x}1,
	\end{align*}
	where the primed sum denotes the sum for $n \le x$ where $n+h\equiv 0 \bmod u, n\equiv 0\bmod v$ and $n-h\equiv 0\bmod w$.
	Note that the outer sum is over all the integer triples $(u,v,w)$ in the set $[1,x+h]\times[1,x]\times[1,x-h].$
	We split the sum to write
	$$
	S(x;h) = S_1(x;h)+S_2(x;h)-S_3(x;h),
	$$
	where
	\begin{align*}
		S_1(x;h) :=  \sum_{\substack{u,v,w\le x }} \  \sumdash_{n\le x}1, \
		S_2(x;h) :=  \sum_{\substack{x<u\le x+h \\ v,w\le x}}\  \sumdash_{n\le x}1 \ \text { and } \
		S_3(x;h) :=   \sum_{\substack{u\le x+h, v\le x \\ x-h<w\le x}}\  \sumdash_{n\le x}1.
	\end{align*}
	It can be seen that $S_2(x;h)=O(\sigma_1(h)\log^2x)$. However, for a lower bound of $S(x;h)$,
	we can just write $S(x;h) \ge S_1(x;h)-S_3(x;h)$.
	Now for $S_3(x;h)$, we note that $1-h\le n-h\le x-h$ and $w\mid (n-h)$.
	If $n-h=mw$ for some $m\ge 1$, then $x-h<w\le mw=n-h\le x-h$, a contradiction.
	Hence, $n-h=mw$ for $m\le 0$. Now suppose $n-h=-tw$ for some $t\ge 1$. Since $x-h<w\le x$, we have
	$-tx\le-tw < -tx+th$ and $1-h\le n-h\le x-h$. Therefore, we must have $-tx+th>1-h$, which implies that
	$h>\frac{1+tx}{1+t}>\frac{x}{2}$, a contradiction. Hence $n=h$ is the only possibility and thus
	$$ S_3(x;h)=
	\sum_{\substack{u\le x+h \\ v\le x \\ x-h<w\le x}}
	\sum_{\substack{2h\equiv0 \bmod u \\ h\equiv 0 \bmod v}}
	1\le h d(h) d(2h).
	$$
	This means that to determine the asymptotic behavior
	of $S(x;h)$ it suffices to study $S_1(x;h)$.
	
	\subsection{Estimating $S_1(x;h)$} 
	Applying \lemref{CRT}, we can write the inner sum
	in the definition of $S_1(x;h)$ as
	$$\sumdash_{n\le x}1 = {xg(u,v,w) \over [u,v,w]} + E(x;u,v,w), $$
	where the error term $E:=E(x;u,v,w)$
	is bounded by 1, the term
	$[u,v,w]$ denotes the least common multiple of $u,v,w$ and $g(u,v,w)$ is a multiplicative function
	taking the value $1$ if and only if the system $n\equiv-h\bmod u,n\equiv0\bmod v,n\equiv h\bmod w$ has a solution,
	else it is $0$.  Therefore,
	$$
	S_1(x;h) =  \sum_{u,v,w\le x}g(u,v,w)\left\{\frac{x}{[u,v,w]}+E\right\}.
	$$

	In order to understand the sum $ \sum_{u,v,w\le x}\frac{g(u,v,w)}{[u,v,w]}$, we analyze the multiple Dirichlet series
	$$
	F(s_1,s_2,s_3):=  \sum\limits_{\substack{ u,v,w\ge 1}}\frac{g(u,v,w)}{[u,v,w]}\frac{1}{u^{s_1}}\frac{1}{v^{s_2}}\frac{1}{w^{s_3}},
	$$ 
	defined for $\Re(s_1),\Re(s_2),\Re(s_3)>1$. As $g$ is a multiplicative function (which can be seen using \lemref{CRT}), the function $F$ has a convergent Euler product
	for $\Re(s_1),\Re(s_2),\Re(s_3)>1$, namely,
	$$
	F(s_1,s_2,s_3) =  \prod_{p} \left( \sum_{\nu_1,\nu_2,\nu_3\ge 0} \frac{g(p^{\nu_1},p^{\nu_2},p^{\nu_3})}{[p^{\nu_1},p^{\nu_2},p^{\nu_3}]}
	\frac{1}{p^{\nu_1s_1+\nu_2s_2+\nu_3s_3}}\right).
	$$
	If at least two of $\nu_1,\nu_2,\nu_3$ are $\ge 1$ and $g(p^{\nu_1},p^{\nu_2},p^{\nu_3})=1$, then $p\mid 2h$.
	So we split the Euler product into two sub-products, one for $p \mid 2h$ and the other for $p\nmid 2h$.
	We first consider the infinite product
	$$
	\prod_{p \ \nmid \ 2h} \left(\sum_{\nu_1,\nu_2,\nu_3\ge 0}\frac{g(p^{\nu_1},p^{\nu_2},p^{\nu_3})}{[p^{\nu_1},p^{\nu_2},p^{\nu_3}]}
	\frac{1}{p^{\nu_1s_1+\nu_2s_2+\nu_3s_3}}\right).
	$$
	Hence in this product if at least two of $\nu_1,\nu_2,\nu_3$ are $\ge 1$, then $g(p^{\nu_1},p^{\nu_2},p^{\nu_3})=0$. Therefore,
	we need to consider the cases when at most one of them is positive and we will get non-zero contributions from
	the triples $(0,0,0), (\nu_1,0,0), (0,\nu_2,0), (0,0,\nu_3)$, where $\nu_i\ge 1$ in the respective cases.
	For $(\nu_1,0,0)$, the contribution is
	$$
	\sum_{\nu_1\ge 1}\frac{1}{[p^{\nu_1},1,1]}\frac{1}{p^{\nu_1s_1}}=\sum_{\nu_1\ge 1}\frac{1}{p^{(1+s_1)\nu_1}} = \frac{1}{p^{1+s_1}-1}.
	$$
	Similarly, for $(0,\nu_2,0)$ and $(0,0,\nu_3)$, the contributions are
	$$ \sum_{\nu_2\ge 1}\frac{1}{p^{(1+s_2)\nu_2}}= \frac{1}{p^{1+s_2}-1} \text{ and }
	\sum_{\nu_3\ge 1}\frac{1}{p^{(1+s_3)\nu_3}}= \frac{1}{p^{1+s_3}-1},$$
	respectively. Therefore, for $p\nmid 2h$ the corresponding Euler product is
	$$
	\prod_{p\ \nmid \ 2h}\left(1+\frac{1}{p^{1+s_1}-1}+\frac{1}{p^{1+s_2}-1}+\frac{1}{p^{1+s_3}-1}\right).
	$$
	We claim that this is same as $\zeta(1+s_1)\zeta(1+s_2)\zeta(1+s_3)$ up to an infinite product which is convergent on the domain
	$$
	\left\{(s_1,s_2,s_3)\in \mathbb{C}^3 : \Re(s_i)>-\frac{1}{2} \  \text{for all} \  i\right\}.
	$$ 
	To see this, we study the product
	$$
	\prod_{p \ \nmid \ 2h}\left(1+\frac{1}{p^{1+s_1}-1}+\frac{1}{p^{1+s_2}-1}+\frac{1}{p^{1+s_3}-1}\right)
	\left(1-\frac{1}{p^{1+s_1}}\right)\left(1-\frac{1}{p^{1+s_2}}\right)\left(1-\frac{1}{p^{1+s_3}}\right).
	$$
	For $X,Y,Z\neq 0,1$, note that
	\begin{align*}
		&\left(1+\frac{1}{X-1}+\frac{1}{Y-1}+\frac{1}{Z-1}\right)\left(1-\frac{1}{X}\right) \left(1-\frac{1}{Y}\right)\left(1-\frac{1}{Z}\right)\\
		&= \left(1-\frac{1}{XY}-\frac{1}{YZ}-\frac{1}{ZX}+\frac{2}{XYZ}\right).
	\end{align*}
	Clearly, the infinite product 
	$$A_h(s_1,s_2,s_3) := \prod_{p \ \nmid \ 2h}\left(1-\frac{1}{p^{s_1+s_2+2}}-\frac{1}{p^{s_2+s_3+2}}-\frac{1}{p^{s_3+s_1+2}}+\frac{2}{p^{s_1+s_2+s_3+3}}\right)
	$$
	converges absolutely if $\Re(1+s_i)>1/2$ for all $i=1,2,3$. We are now ready to apply the Tauberian theorems of
	de la Bret\`eche, \thmref{theorem 1, breteche} and \thmref{theorem 2, breteche}, to derive the following proposition.
	
	\begin{prop}\label{second}
		As $x \to \infty$, we have
		$$
		\sum_{u,v,w\le x}\frac{g(u,v,w)}{[u,v,w]}  = \Delta(h)\prod_{p}\left(1-\frac{1}{p}\right)^2\left(1+\frac{2}{p}\right)(\log x)^3 + O_h((\log x)^2), 
		$$ 
		where $\Delta(h)$ is a non-zero constant given by
		\begin{equation}\label{Delta}
			\Delta(h) = \prod_{p\mid 2h}\left(1-\frac{1}{p}\right)\left(1+\frac{2}{p}\right)^{-1}
			\left(\sum_{\nu_1,\nu_2,\nu_3\ge 0} \frac{g(p^{\nu_1},p^{\nu_2},p^{\nu_3})}{[p^{\nu_1},p^{\nu_2},p^{\nu_3}]}\right).
		\end{equation}
	\end{prop}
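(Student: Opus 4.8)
The plan is to feed the Dirichlet series
$F(s_1,s_2,s_3)=\sum_{u,v,w}\frac{g(u,v,w)}{[u,v,w]}u^{-s_1}v^{-s_2}w^{-s_3}$
into de la Bret\`eche's Tauberian machinery (Theorems~\ref{theorem 1, breteche} and~\ref{theorem 2, breteche}) with the weight $\pmb\beta=(1,1,1)$, so that the left-hand side of the proposition is exactly the counting sum $S(X^{\pmb\beta})$ evaluated at $X=x$. The computation carried out just before the statement already supplies the decisive factorization
\begin{equation*}
F(s_1,s_2,s_3)=\zeta(1+s_1)\zeta(1+s_2)\zeta(1+s_3)\,H_1(s_1,s_2,s_3),
\end{equation*}
where $H_1=A_h(s_1,s_2,s_3)\prod_{p\mid 2h}\bigl[(1-p^{-1-s_1})(1-p^{-1-s_2})(1-p^{-1-s_3})L_p(s_1,s_2,s_3)\bigr]$, with $L_p$ the local factor at $p\mid 2h$ (holomorphic for $\Re(s_i)>-1$) and $A_h$ absolutely convergent for $\Re(s_i)>-1/2$. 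Thus the polar locus of $F$ is the union of the hyperplanes $s_i=0$, which fixes $\pmb\alpha=(0,0,0)$ and the family of $n=3$ polar forms $\ell^{(i)}(\pmb s)=s_i$.

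Next I would check the three hypotheses of Theorem~\ref{theorem 1, breteche} for the normalized series $H(\pmb s):=F(\pmb s)\,s_1s_2s_3=\prod_{i=1}^{3}\bigl(s_i\zeta(1+s_i)\bigr)\,H_1(\pmb s)$. Absolute convergence for $\Re(s_i)>0=\alpha_i$ is immediate from the factorization (note that $f\ge 0$, which is all that de la Bret\`eche's ``positive'' hypothesis requires). Since each $s_i\zeta(1+s_i)$ is entire and $H_1$ is holomorphic and bounded on $\{\Re(s_i)>-\delta_1\}$ for any $\delta_1<1/2$ (the binding constraint coming from $A_h$), $H$ extends holomorphically there, and one may take $\mathcal K=\varnothing$. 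The growth hypothesis (3) is where the real analytic input enters: combining a convexity bound of the shape $|\zeta(1+s_i)|\ll(1+|\Im s_i|)^{-\Re(s_i)/2+\epsilon}$ with the uniform boundedness of $H_1$ on the domain yields a majorant of precisely the prescribed form $\prod_i(|\Im(\ell^{(i)}(\pmb s))|+1)^{1-\delta_2\min\{0,\Re(\ell^{(i)}(\pmb s))\}}$ with $\delta_2=1/2$.

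With the hypotheses in place, Theorem~\ref{theorem 1, breteche} at $\pmb\beta=(1,1,1)$ gives $\sum_{u,v,w\le x}\frac{g(u,v,w)}{[u,v,w]}=Q_{\pmb\beta}(\log x)+O(x^{-\theta})$ (the prefactor $X^{\langle\pmb\alpha,\pmb\beta\rangle}$ is $1$ since $\pmb\alpha=0$). As $J(\pmb\alpha)=\{1,2,3\}$, the supplementary forms $\ell^{(4)},\ell^{(5)},\ell^{(6)}=e_1^*,e_2^*,e_3^*$ coincide with $s_1,s_2,s_3$, so $\{\ell^{(i)}\}_{i=1}^{6}$ has rank $3$ and $\deg Q_{\pmb\beta}\le n+r-\mathrm{rank}=3$. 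To pin the leading term I would invoke Theorem~\ref{theorem 2, breteche}: its condition (1) holds trivially because the forms span $\mathbb C^3$, and the minimality condition (2) holds because only the full family attains $\mathrm{card}-\mathrm{rank}=3$ (deleting any form lowers the cardinality by one without lowering the rank, since each of $s_1,s_2,s_3$ occurs twice). Hence $Q_{\pmb\beta}(\log X)=C_0\,I(X^{\pmb\beta})+O((\log X)^{2})$ with $C_0=H(0,0,0)$ and $\rho=3$; because $\ell^{(i)}(\pmb\alpha)=0$ and $\ell^{(i)}(e_j)=\delta_{ij}$ force $A(X^{\pmb\beta})=[1,X]^3$, the integral collapses to $I(X^{\pmb\beta})=\int_1^X\!\int_1^X\!\int_1^X\frac{dy_1\,dy_2\,dy_3}{y_1y_2y_3}=(\log X)^3$.

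It then remains only to identify $C_0$. Since $s_i\zeta(1+s_i)\to1$ as $s_i\to0$, one has $C_0=H(0,0,0)=H_1(0,0,0)$. Using the elementary identity $(1-\tfrac1p)^2(1+\tfrac2p)=1-\tfrac3{p^2}+\tfrac2{p^3}$ to recognise $A_h(0,0,0)=\prod_{p\nmid 2h}(1-\tfrac1p)^2(1+\tfrac2p)$, and folding in the finitely many Euler factors at $p\mid 2h$, a short manipulation yields $C_0=\Delta(h)\prod_p(1-\tfrac1p)^2(1+\tfrac2p)$ with $\Delta(h)$ exactly as in \eqref{Delta}; nonvanishing is clear because each local factor is positive and $L_p(0,0,0)\ge1$. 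The main obstacle I anticipate is the verification of growth hypothesis (3), namely assembling the vertical-strip bounds for $\zeta$ together with the uniform control of $H_1$ into the exact majorant demanded by Theorem~\ref{theorem 1, breteche}, alongside the bookkeeping of the linear forms (the coincidence $e_j^*=\ell^{(j)}$ and the ensuing rank and minimality computations); by contrast, the final evaluation of $C_0$ is routine algebra.
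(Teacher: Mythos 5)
Your proposal follows essentially the same route as the paper: the same factorization $F=\zeta(1+s_1)\zeta(1+s_2)\zeta(1+s_3)A_hB_h\prod_{p\mid 2h}(\cdots)$, the same choice $\pmb\alpha=(0,0,0)$ and polar forms $\ell^{(i)}(\pmb s)=s_i$, the same application of Theorems~\ref{theorem 1, breteche} and~\ref{theorem 2, breteche} with $\pmb\beta=(1,1,1)$ yielding $I(x^{\pmb\beta})=(\log x)^3$, and the same evaluation of $C_0=H(0,0,0)$. The only difference is that you spell out the minimality condition of Theorem~\ref{theorem 2, breteche} and the shape of the $\zeta$-bounds more explicitly than the paper does, which is a correct and welcome elaboration rather than a different argument.
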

	
	Assuming \propref{second}, we first complete our proof.
    Note that
	\begin{align*}
		S_1(x;h) =  \sum_{u,v,w\le x}g(u,v,w)\left\{\frac{x}{[u,v,w]}+E  \right\} \ge \sum_{u,v,w\le x^{1/3}}g(u,v,w)\left\{\frac{x}{[u,v,w]}+O(1)\right\}.
	\end{align*}
	Applying \propref{second}, we therefore have
	$$
	S_1(x;h) \ge \sum_{u,v,w\le x^{1/3}}g(u,v,w)\frac{x}{[u,v,w]}+O(x) \gtrsim  C_hx (\log x)^3/27 +O_h(x(\log x)^2),
	$$
	where 
	$$C_h =\Delta(h)\prod_{p}\left(1-\frac{1}{p}\right)^2\left(1+\frac{2}{p}\right).$$
	So we are left to prove \propref{second} and calculate the constant as claimed in \thmref{main}. Note that,
	for the constant we need to show that $\Delta(h)=11f(h)/8$ (see \eqref{c_h} and \eqref{Delta}).
	
	\subsection{Proof of \propref{second}}
	We need to apply both \thmref{theorem 1, breteche} and \thmref{theorem 2, breteche}. Recall that
	\begin{align*}
		F(s_1,s_2,s_3)&= \prod_{p\nmid 2h}\left( \sum_{\nu_1,\nu_2,\nu_3\ge 0} \frac{g(p^{\nu_1},p^{\nu_2},p^{\nu_3})}{[p^{\nu_1},p^{\nu_2},p^{\nu_3}]}
		\frac{1}{p^{\nu_1s_1+\nu_2s_2+\nu_3s_3}}\right)\times\\
		&\ \ \ \  \prod_{p\mid 2h}\left( \sum_{\nu_1,\nu_2,\nu_3\ge 0} \frac{g(p^{\nu_1},p^{\nu_2},p^{\nu_3})}{[p^{\nu_1},p^{\nu_2},p^{\nu_3}]}\frac{1}{p^{\nu_1s_1+\nu_2s_2+\nu_3s_3}}\right)\\
		&=A_h(s_1,s_2,s_3)B_h(s_1,s_2,s_3)\zeta(1+s_1)\zeta(1+s_2)\zeta(1+s_3)\times\\&\ \ \ \
		\prod_{p\mid 2h}\left( \sum_{\nu_1,\nu_2,\nu_3\ge 0} \frac{g(p^{\nu_1},p^{\nu_2},p^{\nu_3})}{[p^{\nu_1},p^{\nu_2},p^{\nu_3}]}\frac{1}{p^{\nu_1s_1+\nu_2s_2+\nu_3s_3}}\right),
	\end{align*}
	where
	$$
	B_h(s_1,s_2,s_3) := \prod_{p\mid 2h} \left(1-\frac{1}{p^{1+s_1}}\right)\left(1-\frac{1}{p^{1+s_2}}\right)\left(1-\frac{1}{p^{1+s_3}}\right).
	$$
	Note that from the expressions of $A_h(s_1,s_2,s_3)$ and $F(s_1,s_2,s_3)$, we clearly see that $F(s_1,s_2,s_3)$ converges
	if $\Re(s_i) >0$ for all $i=1,2,3$. So we have $\pmb\alpha = (0,0,0)$. Moreover, choosing the family
	$\mathcal{L} =\left\{\ell^{(1)},\ell^{(2)},\ell^{(3)}\right\}$ of non-zero linear forms defined as $\ell^{(i)}(s_1,s_2,s_3) = s_i$ for $i=1,2,3$,
	we get that the function $H(s_1,s_2,s_3)$, defined by
	$$
	H(\textbf{\textit{s}}) :=F(\textbf{\textit{s}}+\pmb{\alpha}) \prod_{i=1}^{3}\ell^{(i)}(\textbf{\textit{s}})=F(s_1,s_ 2,s_3)s_1s_2s_3,
	$$
	can be extended to the domain $\left\{(s_1,s_2,s_3) \in \mathbb{C}^3 : \Re(\ell^{(i)}(s))>-1/2 \text{ for all } i=1,2,3\right\}$. So we
	can choose $\delta_1= \frac{1}{2}$, to apply \thmref{theorem 1, breteche}. Moreover, as the function $H$ is essentially
	the product of the Riemann zeta function and linear forms
	in the $s_i$ variables, the desired growth condition is satisfied using classical estimates (can be found in Titchmarsh
	\cite{titchmarsh}). 
	Since $\pmb\alpha=(0,0,0)$, $r=|J(\pmb\alpha)|=|\left\{j \in \{1,2,3\}:\alpha_j=0\right\}|=3$.
	We consider the linear forms $\{\ell^{(4)},\ell^{(5)},\ell^{(6)}\}$ given by $\ell^{(3+i)}(s_1,s_2,s_3) = e_i^*(s_1,s_2,s_3)=s_i$ for $i=1,2,3$.
	Therefore, using \thmref{theorem 1, breteche}, we conclude that
	\begin{align*}
		\sum_{u\le x^{\beta_1}} \ \sum_{v\le x^{\beta_2}} \ \sum_{w\le x^{\beta_3}} \ \frac{g(u,v,w)}{[u,v,w]}
		&\sim x^{\langle\pmb\alpha,\pmb\beta\rangle}Q_{\pmb\beta}(\log x).
	\end{align*}
	Now we will apply \thmref{theorem 2, breteche} for ${\pmb\beta}=(\beta_1,\beta_2,\beta_3)=(1,1,1)$. The concerned linear form
	is $\mathcal{B}(s_1,s_2,s_3) := \left(\sum_{j=1}^{3}\beta_j e_j^*\right)(s_1,s_2,s_3) = s_1+s_2+s_3$. The polynomial
	$Q_{\pmb\beta}$ satisfies the relation
	$$
	Q_{\pmb\beta}(\log x) = C_0x^{-\langle\pmb\alpha,\pmb\beta\rangle}I(x^{\pmb\beta}) +O((\log x)^{\rho-1}),
	$$
	where $C_0:= H(0,0,0), \rho:=n+r-\text{rank}\left(\left\{\ell^{(i)}\right\}_{i=1}^6\right) = 3$ and
	$$
	I(x^{\pmb\beta}) := \int\int\int_{A(x^{\pmb\beta})} \frac{dy_1dy_2dy_3}{\prod_{i=1}^{3} y_i^{1-\ell^{(i)}(\pmb\alpha)}},
	$$ 
	with 
	$$
	A(x^{\pmb\beta}) := \left\{\pmb{y} \in [1,\infty)^3: \prod_{i=1}^{3}y_i^{\ell^{(i)}(e_j)} \le x^{\beta_j} \  \ \text{for all} \ j\right\}
	= \left\{(y_1,y_2,y_3) : 1\le y_i\le x \  \ \text{for all} \ i \right\}.
	$$
	Therefore,
	$$
	I(x^{\pmb\beta}) = \int_{y_1=1}^{x}\int_{y_2=1}^{x}\int_{y_3=1}^{x} \frac{dy_1dy_2dy_3}{y_1y_2y_3} = (\log x)^3.
	$$
	Further,
	\begin{align*}
		H(0,0,0) &= \prod_{p\nmid 2h}\left(1-\frac{3}{p^2}+\frac{2}{p^3}\right)  \prod_{p\mid 2h}\left(1-\frac{1}{p}\right)^3
		\prod_{p\mid 2h}\left(  \sum_{\nu_1,\nu_2,\nu_3\ge 0} \frac{g(p^{\nu_1},p^{\nu_2},p^{\nu_3})}{[p^{\nu_1},p^{\nu_2},p^{\nu_3}]}\right)\\
		&=\prod_{p\nmid 2h}\left(1-\frac{1}{p}\right)^2\left(1+\frac{2}{p}\right)  \prod_{p\mid 2h}\left(1-\frac{1}{p}\right)^3 
		\prod_{p\mid 2h}\left(  \sum_{\nu_1,\nu_2,\nu_3\ge 0} \frac{g(p^{\nu_1},p^{\nu_2},p^{\nu_3})}{[p^{\nu_1},p^{\nu_2},p^{\nu_3}]}\right)\\
		&=\prod_{p}\left(1-\frac{1}{p}\right)^2\left(1+\frac{2}{p}\right)
		\prod_{p\mid 2h}\left(1-\frac{1}{p}\right)\left(1+\frac{2}{p}\right)^{-1} \left(\sum_{\nu_1,\nu_2,\nu_3\ge 0} \frac{g(p^{\nu_1},p^{\nu_2},p^{\nu_3})}{[p^{\nu_1},p^{\nu_2},p^{\nu_3}]}\right)\\
		&=\Delta(h)\prod_{p}\left(1-\frac{1}{p}\right)^2\left(1+\frac{2}{p}\right).
	\end{align*}
	
	\subsection{Verification of $\Delta(h)=11f(h)/8$}
	We begin with an odd prime factor $p$ of $h$. 
	We have to evaluate the sum
	$$
	s_p:=\sum_{\nu_1,\nu_2,\nu_3\ge 0} \frac{g(p^{\nu_1},p^{\nu_2},p^{\nu_3})}{[p^{\nu_1},p^{\nu_2},p^{\nu_3}]}.
	$$
	Recall that $g(p^{\nu_1},p^{\nu_2},p^{\nu_3}) = 1$ if and only if 
	$(p^{\nu_1}, p^{\nu_2}) \mid h$, $(p^{\nu_2}, p^{\nu_3}) \mid h$ and $   (p^{\nu_1}, p^{\nu_3}) \mid 2h$.
	Let $r$ be the highest power of $p$ dividing $h$. Clearly, if $g(p^{\nu_1},p^{\nu_2},p^{\nu_3})=1$, then at most one of the
	$\nu_i$ can be $\ge r+1$. The contribution in each case to the sum $s_p$ is
	$$
	(r+1)^2\sum_{a \ge r+1}\frac{1}{p^{a}}.
	$$
	Hence the total contribution here would be $3(r+1)^2\sum_{a \ge r+1}\frac{1}{p^{a}}$. Now we can assume $0 \le \nu_i \le r$ for all $i=1,2,3$.
	If all these indices are equal, we get the contribution 
	$$
	\sum_{a=0}^r\frac{1}{p^{a}}.
	$$
	If exactly two of the indices are equal, then the remaining index has $3$ possibilities and that index can be bigger or smaller than the
	other two indices. In any case, we get the contribution
	$$
	6\sum_{a=1}^r \sum_{b=0}^{a-1} \frac{1}{p^{a}}=6\sum_{a=1}^r \frac{a}{p^{a}}.
	$$
	Now if all three indices are different, we can have $6$ possible orderings among them and in this case, we get the contribution
	$$
	6\sum_{a=2}^r \sum_{b=1}^{a-1}\sum_{c=0}^{b-1} \frac{1}{p^{a}}=6\sum_{a=2}^r \sum_{b=1}^{a-1}  \frac{b}{p^{a}}= 3 \sum_{a=2}^r  \frac{a(a-1)}{p^{a}},
	$$
	and hence
	$$
	s_p=3(r+1)^2\sum_{a \ge r+1}\frac{1}{p^{a}}+\sum_{a=0}^r\frac{1}{p^{a}}+6\sum_{a=1}^r \frac{a}{p^{a}}+3 \sum_{a=2}^r  \frac{a(a-1)}{p^{a}}.
	$$
	It is easy to check that
	$$
	(r+1)^2\sum_{a \ge r+1}\frac{1}{p^{a}}= \frac{(r+1)^2}{p^{r+1}} \left(1-\frac{1}{p}\right)^{-1} \text{ and } \sum_{a=0}^r\frac{1}{p^{a}}=\left(1-\frac{1}{p^{r+1}}\right)\left(1-\frac{1}{p}\right)^{-1}.
	$$
	One can also check that
	$$
	\sum_{a=1}^r \frac{a}{p^{a}}=\left(\frac{1}{p} - \frac{r+1}{p^{r+1}} + \frac{r}{p^{r+2}} \right)\left(1-\frac{1}{p}\right)^{-2},
	$$
	and
	$$
	\sum_{a=2}^r  \frac{a(a-1)}{p^{a}}=\left(\frac{2}{p^2} - \frac{r^2+r}{p^{r+1}} + \frac{2r^2-2}{p^{r+2}} - \frac{r^2-r}{p^{r+3}} \right)\left(1-\frac{1}{p}\right)^{-3}.
	$$
	So,
	\begin{align*}
		\left(1-\frac{1}{p}\right)^3 s_p=&\left(\frac{3r^2+6r+2}{p^{r+1}}+1\right)\left(1-\frac{2}{p}+\frac{1}{p^2}\right)+6\left(\frac{1}{p} - \frac{r+1}{p^{r+1}} + \frac{r}{p^{r+2}} \right)\left(1-\frac{1}{p}\right)\\
		&+3\left(\frac{2}{p^2} - \frac{r^2+r}{p^{r+1}} + \frac{2r^2-2}{p^{r+2}} - \frac{r^2-r}{p^{r+3}} \right)\\
		=& 1 + \frac{4}{p} + \frac{1}{p^2} - \frac{3r+4}{p^{r+1}} - \frac{4}{p^{r+2}} + \frac{3r+2}{p^{r+3}}.
	\end{align*}
	This completes the verification of the factors corresponding to the odd prime divisors of $h$, using \eqref{f}.
	For the prime $p=2$, we show that if $r$ is the highest power of $2$ dividing $h$, then the corresponding
	factor in $\Delta(h)$ is $\frac{13}{2}-\frac{41+15r}{2^{r+3}}=\frac{11}{8} \left( \frac{52}{11}-\frac{41+15r}{2^r\times 11} \right)$.
	This will complete the proof.
	
	We evaluate the sum
	$$
	s_2:=\sum_{\nu_1,\nu_2,\nu_3\ge 0} \frac{g(2^{\nu_1},2^{\nu_2},2^{\nu_3})}{[2^{\nu_1},2^{\nu_2},2^{\nu_3}]}.
	$$
	If $0 \le \nu_i \le r$ for all $i=1,2,3$, then as before the contribution to $s_2$ in this case is
	\begin{align*}
		&\sum_{a=0}^r\frac{1}{2^{a}}+6\sum_{a=1}^r \frac{a}{2^{a}}+3 \sum_{a=2}^r  \frac{a(a-1)}{2^{a}}\\
		&=2\left(1-\frac{1}{2^{r+1}}\right) + 24\left(\frac{1}{2} - \frac{r+1}{2^{r+1}} + \frac{r}{2^{r+2}} \right)
		+ 24\left(\frac{1}{2} - \frac{r^2+r}{2^{r+1}} + \frac{2r^2-2}{2^{r+2}} - \frac{r^2-r}{2^{r+3}} \right)\\
		&= 26 - \frac{25}{2^r} - \frac{15r}{2^r} - \frac{3r^2}{2^r}.
	\end{align*}
	Now we assume that $\nu_i \ge r+1$ for at least one of $i=1,2,3$. Here the counting is different.
	We note that if $g(2^{\nu_1},2^{\nu_2},2^{\nu_3})=1$, then we have the following conditions: 
	at most one of $\nu_1,\nu_2 >r$, at most one of $\nu_2,\nu_3 >r$ and at most one of $\nu_1,\nu_3 >r+1$.
	We then have the following four possible choices:
	\begin{enumerate}
		\item[i)] if $\nu_1=r+1$, then $\nu_2 <r+1$ and $\nu_3 \ge 0$;
		\item[ii)] if $\nu_1>r+1$, then $\nu_2 <r+1$ and $\nu_3 \le r+1$;
		\item[iii)] if $\nu_1<r+1$ and $\nu_2 \ge r+1$, then $\nu_3 < r+1$;
		\item[iv)] if  $\nu_1<r+1$ and $\nu_2 < r+1$, then $\nu_3 \ge r+1$.
	\end{enumerate}
	The contribution of case i) to $s_2$ is
	$$
	\frac{(r+1)^2}{2^{r+1}} + (r+1)\sum_{\nu_3 \ge r+1}\frac{1}{2^{\nu_3}}=\frac{(r+1)^2}{2^{r+1}}+\frac{(r+1)}{2^{r}}.
	$$
	The contribution of case ii) to $s_2$ is
	$$
	(r+1)(r+2)\sum_{\nu_1 > r+1}\frac{1}{2^{\nu_1}}=\frac{(r+1)(r+2)}{2^{r+1}}=\frac{(r+1)^2}{2^{r+1}}+\frac{(r+1)}{2^{r+1}}.
	$$
	The contribution of case iii) to $s_2$ is
	$$
	(r+1)^2\sum_{\nu_2 \ge r+1}\frac{1}{2^{\nu_2}}=\frac{(r+1)^2}{2^{r}}.
	$$
	The contribution of case iv) to $s_2$ is
	$$
	(r+1)^2\sum_{\nu_3 \ge r+1}\frac{1}{2^{\nu_3}}=\frac{(r+1)^2}{2^{r}}.
	$$
	Hence the total contribution of these above four cases is $\frac{3(r+1)^2}{2^{r}}+\frac{3(r+1)}{2^{r+1}}$ and thus
	$$
	s_2= 26 - \frac{41+15r}{2^{r+1}}=4\left(\frac{13}{2}-\frac{41+15r}{2^{r+3}}\right),
	$$
	and this completes the proof of \thmref{main}.
	
	\section{Proof of \thmref{main2}}
	In 1952, Erd\H{o}s proved that if $f(x)$ is any
	irreducible polynomial with integer coefficients,
	then there are positive constants $c_1$ and $c_2$ such that
	$$ c_1 x\log x \leq \sum_{n\leq x} d(f(n)) \leq c_2 x\log x,$$
	for $x\geq 2$.  The result is false if
	$f(x)$ is not irreducible as can be 
	seen from Ingham's theorem (\ref{shifted-divisor}).  However, the argument of Erd\H{o}s can be suitably adapted to show that 
	in our case,
	$$S(x;h) \ll_h x(\log x)^3.  $$
	Though it is difficult to identify them in Erd\H{o}s's paper \cite{erdos}, his method has three steps
	and it proceeds as follows.  We consider $f(x)=x(x-h)(x+h)$.
	
	By Lemma \ref{d}, we first note that 
	$$ S(x;h)\ll_h
	\sum_{n\leq x} d(f(n)).$$
	Hence our goal is to bound the latter sum
	applying the method of Erd\H{o}s.
	Suppose that 
	$f(n)$ has $r$ prime factors, counted with multiplicity.  We write 
	\begin{equation}\label{factorization}
		|f(n)|= (p_1 p_2 \cdots p_j)(p_{j+1} \cdots p_r),
	\end{equation}
	with $p_1\leq p_2\leq \cdots \le p_j\leq p_{j+1}\leq \cdots \leq p_r$ and
	the index $j$ is the largest such that 
	$$m:= p_1 p_2 \cdots p_j \leq x.  $$
	The strategy is that the bulk of the divisors
	of $f(n)$ are actually coming from the factor $m=(p_1p_2\cdots p_j)$.  Indeed, 
	by the sub-multiplicativity of the divisor function,
	we have 
	$$d(f(n))\leq 2^{r-j} d(p_1\cdots p_j).$$
	\begin{enumerate}
		\item  {\bf Step 1:}  
		If $n$ is such that $r-j$ is bounded
		by 11 (say), then
		the contribution to the sum $S(x;h)$ from
		such $n$ is bounded by
		$$ \ll \sum_{m\leq x} \sum_{a|f(m), \hspace{0.5mm} a \le x} 1 = \sum_{a\leq x} \, \sum_{a|f(m), \hspace{0.5mm} m\leq x} 1.$$
		By the Chinese remainder theorem as in Lemma \ref{b}, the inner sum is
		easily seen to be
		$$\ll {3^{\omega(a)}x\over a}.$$
		Summing this over $a\leq x$ 
		and using Lemma \ref{ant}, we get the desired bound
		of $O(x\log^3 x)$ coming from these $n$'s.
		\item  {\bf Step 2:}  Suppose now that $n$ is
		such that for $f(n)$, 
		$r-j$ is greater than 11.  
		Choosing $x$ large enough, we get that 
		$p_{j+1}<x^{4/11}$, for
		otherwise $|f(n)|>x^4,$ a contradiction
		since in our case $f$ has degree 3 and $f(n)=O(n^3)$.
		By the definition of $j$, we have 
		$$(p_1\cdots p_j )p_{j+1} >x$$
		so that 
		\begin{equation}\label{bounds}
			x^{7/11} < p_1 \cdots p_j < x.
		\end{equation}
		Now, $p_j \leq p_{j+1} <x^{4/11}$ and so, there is
		a positive integer $t$ such that 
		$$ x^{1/(t+1)} \leq p_j \leq x^{1/t}.$$
		{\bf Step 2a.} We first suppose that $x^{1/t}>(\log x)^2$. Note that $p_1\cdots p_j$ is $x^{1/t}$-smooth. 
		The plan  is to estimate the contribution from each
		possible value of $t$ and then sum over the $t$.  
		Thus, fixing $t$, we see that 
		$$ p_{j+1} \cdots p_r > x^{\frac{r-j}{t+1}}.$$
		For our cubic polynomial, we must have $r-j\leq 3(t+1)$. Thus, for such $n$
		under consideration, we use
        the elementary inequality
\begin{equation}\label{dual}
d(n)\leq 2\sum_{\substack{a|n\\a\ge\sqrt{n}}} 1,
\end{equation}
        to get
		$$d(f(n))< 2^{3t+3} d(p_1\cdots p_j)\leq 2^{3t+4} \sum_{\substack{d|p_1\cdots p_j\\ x\geq d\geq\sqrt{p_1\cdots p_j}} } 1,$$
		with $p_1\cdots p_j$
		being $x^{1/t}$-smooth and the divisor $d > x^{7/22}$ by virtue
		of (\ref{bounds}).  
        We have the total contribution from such numbers is  by Lemma \ref{b}, bounded above by
		$$ \sum_{t \ge 1} 2^{3t+4} \sum_{\substack{d\in \S(x, x^{1/t})\\ d > x^{7/22}}} \sum_{\substack{m\leq x\\ d|f(m)}} 1 \ll \sum_{t \ge 1} 2^{3t} \sum_{\substack{d\in \S(x, x^{1/t})\\ d > x^{7/22}}} {x3^{\omega(d)}\over d}.$$
		The innermost sum is by Lemma \ref{smooth-new},
		$$ \ll x(\log x)^3 \exp\left( - ct\log t\right),$$
		for  some suitable positive constant $c$.  Since
		$$\sum_{t \ge 1} 2^{3t} \exp\left( - ct\log t\right)$$
		converges, we are done in this case.
		\newline
		{\bf Step 2b.}  It remains to consider the case
		$x^{1/t}\leq (\log x)^2$.  
		Using the familiar estimate $d(n) \ll n^\epsilon$, for any
		$\epsilon>0$, we first write
		$$
		d(f(n))
		\ll n^{\epsilon} d(p_1 \cdots p_j) 
		\ll x^\epsilon \sum_{\substack{d|p_1\cdots p_j\\ x\geq d\geq\sqrt{p_1\cdots p_j}} } 1,
		$$
		with $p_1\cdots p_j$
		being $(\log x)^2$-smooth and the divisor $d > x^{7/22}$.
		Now summing for all these $n$'s 
		(indicated by a prime for the $n$'s under consideration), we
		get
		$$\sumdash_{n\leq x} d(f(n)) \ll x^\epsilon
		\sum_{\substack{d\in \S(x, (\log x)^2)\\ d>x^{7/22}}} {x3^{\omega(d)}\over d} \ll x^{1+2\epsilon- 7/44},$$
		by Lemmas \ref{b} and \ref{smooth-new}. This is negligible for $0<\epsilon<7/88$ and hence completes the proof. \qed
	\end{enumerate}

    We remark that in the above computations, the implicit constants can be made explicit.

	\section{A more general triple convolution sum}
	For fixed positive integers $h,k$, we can also consider the triple convolution sum
	$$
	T(x;h,k) := \sum_{n\le x}d(n)d(n+h)d(n-k).
	$$
	As before we can write
	\begin{align*}
		T(x;h,k) =  \sum_{n\le x}d(n)d(n+h)d(n-k)= \sum_{n\le x}\left( \sum_{u\mid n+h}1\right)\left( \sum_{v\mid n}1\right)\left( \sum_{w\mid n-k}1\right)
		= \sum\limits_{\substack{u\le x+h \\ v\le x \\ w\le x-k}} \sumdash_{n\le x} 1,
	\end{align*}
	where the primed sum denotes the sum for $n \le x$ where $n+h\equiv 0 \bmod u, n\equiv 0\bmod v$ and $n-k\equiv 0\bmod w$.
	Note that the outer sum is over all the integer triples $(u,v,w)$ in the set $[1,x+h]\times[1,x]\times[1,x-k].$
	Again we split the sum to write
	$
	T(x;h,k) = T_1(x;h,k)+T_2(x;h,k)-T_3(x;h,k),
	$
	where
	\begin{align*}
		T_1(x;h,k) :=  \sum_{\substack{u,v,w\le x }} \  \sumdash_{n\le x}1, \
		T_2(x;h,k) :=  \sum_{\substack{x<u\le x+h \\ v,w\le x}}\  \sumdash_{n\le x} 1 \ \text { and } \
		T_3(x;h,k) :=   \sum_{\substack{u\le x+h, v\le x \\ x-k<w\le x}}\  \sumdash_{n\le x} 1.
	\end{align*}
	As earlier, the main contribution comes from $T_1(x;h,k)$, which can be studied using
	the multiple Dirichlet series
	$$
	\sum_{ u,v,w\ge 1}\frac{g'(u,v,w)}{[u,v,w]}\frac{1}{u^{s_1}}\frac{1}{v^{s_2}}\frac{1}{w^{s_3}},
	$$ 
	where $g'(u,v,w)$ is a multiplicative function which takes the value $1$ if and only if the system
	$n\equiv-h\bmod u, n\equiv0\bmod v,n\equiv k\bmod w$ has a solution, else it is $0$.
	Again we write this series as a product and split that product into two parts,
	one for the primes dividing the least common multiple $[h,k,(h+k)]$ and the other one
	for the primes not dividing $[h,k,(h+k)]$. Applying \thmref{theorem 1, breteche} and \thmref{theorem 2, breteche},
	we can then derive the following proposition.
	
	\begin{prop}
		As $x \to \infty$, we have
		$$\sum_{u,v,w\le x}\frac{g'(u,v,w)}{[u,v,w]} 
		\sim \Delta(h,k)\prod_{p}\left(1-\frac{1}{p}\right)^2\left(1+\frac{2}{p}\right)(\log x)^3 , 
		$$ 
		where $\Delta(h,k)$ is a non-zero constant given by
		\begin{equation}\label{Delta-hk}
			\Delta(h,k) =\prod_{p\mid [h,k,(h+k)]}\left(1-\frac{1}{p}\right)\left(1+\frac{2}{p}\right)^{-1}
			\left(\sum_{\nu_1,\nu_2,\nu_3\ge 0} \frac{g'(p^{\nu_1},p^{\nu_2},p^{\nu_3})}{[p^{\nu_1},p^{\nu_2},p^{\nu_3}]}\right).
		\end{equation}
	\end{prop}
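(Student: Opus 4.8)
The plan is to follow the proof of \propref{second} \emph{mutatis mutandis}, because the analytic structure is dictated entirely by the primes away from the relevant modulus, and those behave identically in the two problems. First I would record that $g'$ is multiplicative (again a consequence of \lemref{CRT}) and expand the associated Dirichlet series as an Euler product. The decisive local input is the solvability criterion of \lemref{CRT} applied to the system $n\equiv -h\bmod u$, $n\equiv 0\bmod v$, $n\equiv k\bmod w$: a solution exists precisely when $(u,v)\mid h$, $(v,w)\mid k$ and $(u,w)\mid(h+k)$. Hence $g'(p^{\nu_1},p^{\nu_2},p^{\nu_3})=1$ if and only if $(p^{\nu_1},p^{\nu_2})\mid h$, $(p^{\nu_2},p^{\nu_3})\mid k$ and $(p^{\nu_1},p^{\nu_3})\mid(h+k)$. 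When $h=k$ (so $h+k=2h$) these are exactly the conditions appearing in \propref{second}, confirming that the present statement specializes correctly.

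Next I would split the Euler product into the factors with $p\mid[h,k,(h+k)]$ and those with $p\nmid[h,k,(h+k)]$. The key observation is that for $p\nmid[h,k,(h+k)]$ the prime $p$ divides none of $h$, $k$, $h+k$, so the three divisibility conditions force $\min(\nu_1,\nu_2)=\min(\nu_2,\nu_3)=\min(\nu_1,\nu_3)=0$; consequently at most one of the $\nu_i$ can be positive. The resulting generic local factor is therefore \emph{identical} to the one in \propref{second}, namely $1+(p^{1+s_1}-1)^{-1}+(p^{1+s_2}-1)^{-1}+(p^{1+s_3}-1)^{-1}$. Factoring out the three zeta functions leaves an Euler product converging absolutely for $\Re(s_i)>-\tfrac12$, exactly as before.

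With this in hand the Tauberian step is verbatim that of \propref{second}: one takes $\pmb\alpha=(0,0,0)$, the linear forms $\ell^{(i)}(\pmb{s})=s_i$, and checks the holomorphy and polynomial-growth hypotheses of \thmref{theorem 1, breteche} (with $\delta_1=\tfrac12$) using the classical bounds for $\zeta$; since the analytic data are unchanged, this requires no new work. Then \thmref{theorem 2, breteche} with $\pmb\beta=(1,1,1)$, $\mathcal B=s_1+s_2+s_3$, $\rho=3$ and $I(x^{\pmb\beta})=(\log x)^3$ produces the claimed main term with leading constant $C_0=H(0,0,0)$. Evaluating $H(0,0,0)$ as in \propref{second} — using $(1-1/p)^2(1+2/p)=1-3/p^2+2/p^3$ for the generic factors and collecting the finitely many factors with $p\mid[h,k,(h+k)]$ — yields $\prod_p(1-1/p)^2(1+2/p)$ times the constant $\Delta(h,k)$ recorded in \eqref{Delta-hk}. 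Positivity of each local sum (its $(0,0,0)$ term alone contributes $1$) shows $\Delta(h,k)\neq 0$.

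The only genuinely new ingredient, and hence the main thing to verify, is the local analysis at the primes dividing $[h,k,(h+k)]$. One must confirm that the combinatorics of the constraints $(p^{\nu_1},p^{\nu_2})\mid h$, $(p^{\nu_2},p^{\nu_3})\mid k$, $(p^{\nu_1},p^{\nu_3})\mid(h+k)$ again yields a finite, explicitly computable local sum. The complication relative to \propref{second} is that for general $h,k$ the three moduli $h$, $k$, $h+k$ need no longer share a common $p$-adic valuation, so the counting of admissible triples $(\nu_1,\nu_2,\nu_3)$ branches according to the relative sizes of $v_p(h)$, $v_p(k)$ and $v_p(h+k)$. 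I expect this case analysis to be the principal obstacle, but it is entirely parallel to the bookkeeping already carried out in the verification of $\Delta(h)=11f(h)/8$, and so I would not grind through it here.
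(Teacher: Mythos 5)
Your proposal is correct and takes essentially the same route that the paper indicates for this proposition: establish multiplicativity of $g'$ via \lemref{CRT}, split the Euler product according to whether $p\mid[h,k,(h+k)]$, note that the generic local factors (and hence the whole analytic setup) coincide with those in \propref{second}, and then apply \thmref{theorem 1, breteche} and \thmref{theorem 2, breteche} verbatim to extract the constant $H(0,0,0)=\Delta(h,k)\prod_p(1-\tfrac1p)^2(1+\tfrac2p)$. The paper itself only sketches this argument by reference to the proof of \propref{second}, so your write-up, including the correct solvability criterion $(u,v)\mid h$, $(v,w)\mid k$, $(u,w)\mid(h+k)$, is if anything more detailed than the original.
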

	As before, the lower bound and upper bound of $T(x;h,k)$ follows from Wolkes's theorems \cite{DW}. 
	We can also derive an explicit lower bound for $T(x;h,k)$ along the lines of the proof of \thmref{main}.
	
	\section{Concluding remarks}
	
	
	Our methods have further applications.  For example,
	we can investigate triple and higher convolution sums
	of the generalised divisor functions $d_k(n)$, which is the number of ways of writing
	$n$ as a product of $k$ factors ($k \ge 2$). This is to appear in the forthcoming paper\cite{MS}. 
	A further variation arises
	if instead of summing over $n\leq x$, we sum over
	primes $p\leq x$. A more involved sum
	$$\sum_{n\leq x} d(f(n)),$$
	where $f(t)$ is a polynomial with
	integer coefficients having a specified number of irreducible factors can also be dealt using these methods.
    This is not totally straightforward and is to be discussed in the doctoral thesis by the first author.

	
	\section*{Acknowledgements}
	The second author thanks IIT Delhi for its warm
	hospitality and pleasant ambiance during his visit when this research
	was initiated.  We thank Tim Browning and 
	Maksym {Radziwi\l\l} 
	for bringing the papers \cite{NT} and \cite{MRSTT} (respectively) to our attention. 	The research of the second author was partially supported by an NSERC Discovery Grant, numbered RGPIN-2020-03927.  
	The research of the last author was partially supported by IIT Delhi IRD project MI02455
	and by SERB (ANRF) grant SRG/2022/001011.

\end{document}